\tikzset{
    >=stealth',
    punkt/.style={
           rectangle,
           rounded corners,
           draw=black, very thick,
           text width=6.5em,
           minimum height=2em,
           text centered},
    pil/.style={
           ->,
           thick,
           shorten <=2pt,
           shorten >=2pt,}
}
\theoremstyle{definition}
\newtheorem{proposition}{Proposition}[]
\newtheorem{lemma}{Lemma}[]
\newtheorem{remark}{Remark}[] 
\newtheorem{theorem}[]{Theorem}
\newtheorem{assumption}{Assumption}[]
\title[asymptotic relations between control problems]{On asymptotic relations between singular and constrained control problems of one-dimensional diffusions}
\date{\today}
\author{Jukka Lempa}
\address{Jukka Lempa, Department of Mathematics and Statistics, University of Turku, FI - 20014 Turun Yliopisto, Finland, \texttt{jumile@utu.fi}}
\author{Harto Saarinen}
\address{Harto Saarinen, Department of Mathematics and Statistics, University of Turku, FI - 20014 Turun Yliopisto, Finland, \texttt{hoasaa@utu.fi}}
\begin{document}

\begingroup
\let\newpage\relax%
\maketitle
\endgroup
\begin{abstract}
We study the asymptotic relations between certain singular and constrained control problems for one-dimensional diffusions with both discounted and ergodic objectives. By constrained control problems we mean that controlling is allowed only at independent Poisson arrival times. We show that when the underlying diffusion is recurrent, the solutions of the discounted problems converge in Abelian sense to those of their ergodic counterparts. Moreover, we show that the solutions of the constrained problems converge to those of their singular counterparts when the Poisson rate tends to infinity. We illustrate the results with drifted Brownian motion and Ornstein-Uhlenbeck process.
\end{abstract}

\noindent \emph{Keywords:} Bounded variation control, Singular stochastic control, Diffusion process, Resolvent semigroup, Poisson process, Ergodic control, Vanishing discounting

\vspace{10pt}

\noindent \textup{2010} \textit{Mathematics Subject Classification}: \textup{49N25, 49L20, 60J60}

\section{Introduction}

This paper is concerned with asymptotic relations between certain discounted and ergodic control problems for one-dimensional diffusions. More precisely, the following control problems are considered:
\begin{itemize}
    \item[(A)] Classical singular stochastic control problems with both discounted and ergodic criteria 
    \item[(B)] Constrained bounded variation control problems where controlling is allowed only at the independent Poisson arrival times with both discounted and ergodic criteria 
\end{itemize}
These control problems are expected to be linked to each other via certain limiting properties. For instance, it is often expected that in item (A), the values of the problems with discounted criterion are connected to the ergodic problems in an Abelian sense, when the discounting factor vanishes. This relationship, often called the vanishing discount method and sometimes used in a heuristic manner, can be used to solve the ergodic problems \cite{FlowSystems2005, Robin1983, Gatarek1990, Borkar}.

Regarding item (B), the problems of this form have attracted attention in the recent years \cite{Lempa2014, LempaSaarinen2019, Wang2001, MenaldiRobin2017, MenaldiRobin2018}, for related studies in optimal stopping see \cite{Lempa2012, DupuisWang2002, Guo2005}. In these problems, it is reasonable to expect that the value functions of the constrained problems should converge to the values of their singular counterparts as the Poisson arrival rate of the control opportunities tends to infinity.

The main contribution of this paper is that we prove these expectations to be correct for time-homogeneous control problems with one-dimensional diffusion dynamics; our findings are summarized in Figure 1. These diffusion models are important in many applications. Furthermore, the time-homogeneous structure allows explicit calculations, by which we can first solve the HJB-equations of both discounted and ergodic problems separately and then establish that the solutions satisfy the desired limiting properties. This is in contrast to the vanishing discount method, where the HJB-equation of the ergodic problem is solved using the solution of the discounted problem \cite{Robin1983}.  

The remainder of the paper is organized as follows. In section 2, we set up the diffusion dynamics. In section 3, we introduce the control problems and study the functionals appearing in their analysis. The asymptotic relations are proved in section 4. Paper is concluded with an explicit examples in section 5.

\section{Underlying dynamics}
Let $(\Omega, \mathcal{F}, \{\mathcal{F}_t\}_{t \geq 0}, \mathbb{P})$ be a filtered probability space which satisfies the usual conditions. We consider an uncontrolled real-valued process $X$ defined on $(\Omega, \mathcal{F}, \{\mathcal{F}_t\}_{t \geq 0}, \mathbb{P})$ which is modelled as a strong solution to the Itô stochastic differential equation
\begin{equation}
dX_t=\mu(X_t)dt+ \sigma(X_t)dW_t, \quad \quad X_0 = x,
\end{equation}
where $W_t$ is the Wiener process and the functions $\mu$ and $\sigma$ are well-behaved (see \cite{KaratzasShreve1991} chapter 5). For notional convenience, we consider the case where the process evolves in $\mathbb{R}_+$, even though all the results remain unchanged even if the state space would be replaced with any intervalin $\mathbb{R}$.

We define the second-order linear differential operator  $\mathcal{A}$, which represents the infinitesimal generator of the diffusion $X$, as
\begin{equation}
\mathcal{A} = \mu(x) \frac{d}{dx} + \frac{1}{2} \sigma^2(x)\frac{d^2}{dx^2}
\end{equation}
and for a given $r > 0$, we respectively denote the increasing and decreasing solutions to the differential equation $(\mathcal{A}-r)f=0$ by $\psi_r > 0$ and $\varphi_r > 0$. These solutions are often called the \emph{fundamental solutions} and can be identified as the minimal $r$-excessive functions of the diffusion $X$ (see \cite{BorodinSalminen} p. 19).

We denote by $\tau$ the first exist time from $\mathbb{R}_+$, i.e. $\tau = \inf \{ t \geq 0 \mid X_t \not \in \mathbb{R}_+  \}$ and define a set $\mathcal{L}_1^r$ of functions $f$ that satisfy the integrability condition $ \mathbb{E}_x [ \int_0^{\tau}e^{-r s} |f(X_s)| ds ] < \infty $. Using this notation, we define the inverse of the differential operator $(r-\mathcal{A})$, called the resolvent $R_r$, by
\begin{equation*}
(R_rf)(x)=\mathbb{E}_x \bigg[ \int_0^{\tau}e^{-r s} f(X_s) ds \bigg]
\end{equation*}
for all $x \in \mathbb{R}_+$ and $f \in \mathcal{L}_1^r$. We also define the scale density of the diffusion by
\begin{equation*}
S'(x) = \exp \bigg( - \int^x \frac{2 \mu(z)}{\sigma^2(z)}dz \bigg),
\end{equation*}
which is the monotonic (and non-constant) solution to the differential equation $\mathcal{A} f=0$.

Often in computations it is useful to use the formula 
\begin{equation}
\begin{aligned} \label{resolventin laskukaava}
    (R_r f)(x) = & B_r^{-1} \varphi_r(x)\int_0^{x} \psi_r(y)f(y)m'(y)dy \\
      + & B_r^{-1} \psi_r(x)\int_x^{\infty} \varphi_r(y) f(y) m'(y) dy, 
\end{aligned}
\end{equation}
which connects the resolvent and the fundamental solutions $\psi_r$ and $\varphi_r$ (see \cite{BorodinSalminen} p. 19). Here the positive constant, which does not depend on $x$,
$$
B_r = \frac{\psi_r'(x)}{S'(x)} \varphi_r(x) -\frac{\varphi_r'(x)}{S'(x)} \psi_r(x)
$$
is the Wronskian of the fundamental solutions and 
$$
m'(x) = \frac{2}{\sigma^2(x)S'(x)}
$$
denotes the density of the speed measure. We also recall the resolvent equation (see \cite{BorodinSalminen} p. 4)
\begin{equation} \label{ResolventEquation} 
R_q R_r= \frac{R_r-R_q}{q-r}.
\end{equation}

\section{The control problems}

To pose the assumptions for the control problems, we define the function $\theta_r: \mathbb{R}_+ \to \mathbb{R}$ as $\theta_r(x)=\pi(x)+\gamma\rho(x)$, where $\gamma$ is a positive constant and $\pi: \mathbb{R}_+ \to \mathbb{R}$ is the cost function. Here, the function $\rho: \mathbb{R}_+ \to \mathbb{R}$ is defined as $\rho(x)=\mu(x)-rx$, where $r$ is a positive constant called the \emph{discounting factor}. In economic literature, the function $\theta_r$ can be understood as the net convenience yield of holding inventories \cite{Alvarez2004, Dixit1984}. This function appears in wide range of control problems of one-dimensional diffusions, when the criteria to be minimized includes discounting \cite{Lempa2014, Alvarez2004, Matomaki2012}. 

In addition, we note that in the absence of discounting, the function $\theta_r$ reduces to
$$\pi_{\mu}(x) = \pi(x) + \gamma \mu(x),$$
which is in key role in many ergodic control problems of one-dimensional diffusions \cite{LempaSaarinen2019, Alvarez2019}.

We study the control problems under the following assumptionss.

\begin{assumption} We assume that:
\begin{enumerate}
    \item the upper boundary $\infty$ and the lower boundary $0$ are natural,
    \item the cost function $\pi$ is continuous, non-negative and non-decreasing,
    \item the functions $\theta_r$ and id$: x \mapsto x$ are in $\mathcal{L}_1^r$,
    \item there is a unique state $x^* \geq 0$ such that $\theta_r$ (and also $\pi_{\mu}$) is decreasing on $(0, x^* )$ and increasing on $(x^* , \infty)$ and that it satisfies the limiting condition ${ \lim_{x \to \infty} \theta_r(x) \geq 0}$. 
\end{enumerate}
\end{assumption}

\begin{assumption}
\begin{enumerate} We assume that
\item  $m(0,y)=\int_{0}^{y} m'(z)dz <\infty$ and $\int_{0}^{y} \pi_{\mu}(z) m'(z)dz<\infty$ for all $y \in \mathbb{R}_+$,
\item $\lim_{x \downarrow 0} S'(x) = \infty$.
\end{enumerate}
\end{assumption}

We make some remarks on these assumptions. First, we assume that the uncontrolled state variable $X$ cannot become infinitely large or zero in finite time, see \cite{BorodinSalminen} pp. 18–20, for a characterization of the boundary behavior of diffusions. Second, the cost function is non-decreasing and non-negative which is in line with usual economic applications. Third, the resolvents $(R_r \theta_r)(x)$ and $(R_r \, \text{id})(x)$ exists. Fourth, we restrict our attention to the case, where the function $\theta_r$ ($\pi_{\mu}$) has a unique global minimum at $x^*$. Finally, the first part in assumption 2 guarantees that the underlying diffusion has a stationary distribution (see \cite{BorodinSalminen} p. 37).

Lastly, we assume that

\begin{assumption}
The process $N_t$ is a Poisson process with a parameter $\lambda$ and it is independent of the underlying diffusion $X_t$. Furthermore, we assume that, the filtration $\{\mathcal{F}_t\}_{t \geq 0}$ is rich enough to carry the Poisson process $N = (N_t,\mathcal{F}_t)_{t \geq 0}$. We denote the jump times of $N_t$ by $T_i$.
\end{assumption}

Before stating the control problems, we define the auxiliary functions ${\pi_{\gamma}: \mathbb{R}_+ \to \mathbb{R}}$ and $g: \mathbb{R}_+ \to \mathbb{R}$ as
\begin{align*}
&g(x)=\gamma x-(R_r\pi)(x), \\
&\pi_{\gamma}(x) = \lambda \gamma x + \pi(x),
\end{align*}
where $\gamma$, $r$ are the same positive constants as in the definition of $\theta_r$ and $\lambda$ is the intensity of the Poisson process in assumption 3. The next lemma gives useful relationships between these auxiliary functions, $\theta_r$ and $\pi$. The lemma can be proved using the resolvent equation (\ref{ResolventEquation}) and the harmonicity property $(A-r)(R_r\pi)(x)+\pi(x)=0$.
\begin{lemma} \label{LinksBetweenFunctions} Assume that $g, \pi_{\gamma}, \theta_r \in \mathcal{L}_1^r$. Then
\begin{align}
& (\mathcal{A}-r)g(x) = \theta_r(x), \label{funktion g ja thetan yhteys} \\
& (R_{r+\lambda} \pi_\gamma)(x) = \lambda (R_{r+\lambda}g)(x) + (R_r\pi)(x), \\
& \lambda (R_{r+\lambda}g)(x) = (R_{r+\lambda}\theta_r)(x)+g(x). \label{funktion g resolventin ja thetan yhteys}
\end{align}
\end{lemma}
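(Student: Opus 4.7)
The plan is to verify the three identities in the order in which they are stated, using only two algebraic tools: the harmonicity identity $(\mathcal{A}-r)(R_r \pi) = -\pi$ (highlighted in the lemma's statement) and the resolvent equation \eqref{ResolventEquation}.

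For \eqref{funktion g ja thetan yhteys}, I will compute $(\mathcal{A}-r)(\gamma x)$ directly from the definition of $\mathcal{A}$: the second derivative of $\gamma x$ vanishes, so $(\mathcal{A}-r)(\gamma x) = \gamma\mu(x) - r\gamma x = \gamma\rho(x)$. Combining this with the harmonicity identity applied to $R_r\pi$ yields
\[
(\mathcal{A}-r)g(x) = \gamma\rho(x) + \pi(x) = \theta_r(x).
\]

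For the middle identity, I will linearly expand both sides using $\pi_\gamma(x) = \lambda\gamma x + \pi(x)$ and $g(x) = \gamma x - (R_r\pi)(x)$:
\begin{align*}
(R_{r+\lambda}\pi_\gamma)(x) &= \lambda\gamma (R_{r+\lambda}\mathrm{id})(x) + (R_{r+\lambda}\pi)(x),\\
\lambda (R_{r+\lambda}g)(x) &= \lambda\gamma (R_{r+\lambda}\mathrm{id})(x) - \lambda (R_{r+\lambda}R_r\pi)(x).
\end{align*}
The resolvent equation \eqref{ResolventEquation} applied with $q = r+\lambda$ converts $\lambda (R_{r+\lambda}R_r\pi)$ into $R_r\pi - R_{r+\lambda}\pi$, and subtracting the two displays gives the claimed identity. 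For \eqref{funktion g resolventin ja thetan yhteys}, I will apply $R_{r+\lambda}$ to \eqref{funktion g ja thetan yhteys} and decompose $\mathcal{A}-r = (\mathcal{A}-(r+\lambda)) + \lambda$. Using $R_{r+\lambda}(\mathcal{A}-(r+\lambda))g = -g$ on suitable $g$ then yields
\[
(R_{r+\lambda}\theta_r)(x) = -g(x) + \lambda (R_{r+\lambda}g)(x),
\]
which is the third identity after rearrangement.

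The main technical obstacle will be justifying the algebraic manipulations with the differential operator and its resolvent, namely that $g$ is smooth enough and satisfies the integrability required for $R_{r+\lambda}(\mathcal{A}-(r+\lambda))g = -g$ to hold pointwise. The standing hypothesis $g \in \mathcal{L}_1^r$, combined with the inclusion $\mathcal{L}_1^r \subset \mathcal{L}_1^{r+\lambda}$ (immediate from $e^{-(r+\lambda)s} \leq e^{-rs}$), ensures the integrability; the $C^2$-regularity of $g$ on $\mathbb{R}_+$ follows from the explicit representation \eqref{resolventin laskukaava} of $R_r\pi$ together with the smoothness of the fundamental solutions $\psi_r$ and $\varphi_r$.
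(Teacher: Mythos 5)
Your proposal is correct and follows exactly the route the paper indicates: the paper's one-line proof sketch says only that the lemma follows from the resolvent equation \eqref{ResolventEquation} and the harmonicity property $(\mathcal{A}-r)(R_r\pi)(x)+\pi(x)=0$, and your argument is a faithful and complete elaboration of precisely that, with the identity $R_{r+\lambda}(\mathcal{A}-(r+\lambda))g=-g$ being the natural companion fact needed for \eqref{funktion g resolventin ja thetan yhteys}. Your attention to the integrability ($\mathcal{L}_1^r\subset\mathcal{L}_1^{r+\lambda}$) and regularity of $g$ goes slightly beyond what the paper records, and is a welcome addition rather than a deviation.
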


The following functionals are in a key role in the main results of the study, as they offer convenient representations of the integral functionals that include $\psi_r$ and $\varphi_r$:
\begin{align*}
 L_{\theta}^r(x) & = r\int_x^{\infty}  \varphi_{r}(y) \theta(y) m'(y)dy + \frac{\varphi'_{r}(x)}{S'(x)}\theta(x), \\
 K_{\theta}^r(x) & = r \int_0^x \psi_r(y) \theta(y) m'(y)dy - \frac{\psi_r'(x)}{S'(x)} \theta(x), \\
  H(x,y) & = \int_x^{y} (\pi_{\mu}(z)-\pi_{\mu}(x))m'(z)dz.
\end{align*}

We now formulate downward singular control problems of one-dimensional diffusions and similar problems where controlling is allowed only at exogenously given Poisson arrival times. This latter problem is called \emph{constrained} control problem. 

We assume in all of the following theorems, that the controlled dynamics are given by the stochastic differential equation
\begin{equation*}
X_t^D = \mu(X_t^D) dt + \sigma(X_t^D)dW_t - \gamma dD_t, \quad X_0^D = x \in \mathbb{R_+},
\end{equation*}
where $D_t$ denotes the applied control policy and $\gamma$ is positive constant (the coefficient $\gamma$ is often interpreted as proportional transaction cost). 

\begin{assumption}[Admissible control policies] \quad \quad 
\begin{enumerate}
    \item In the singular problems (theorems 1 and 2 below), we call a control policy $D^s_t$ admissible, if it is non-negative, non-decreasing, right-continuous, and $\{\mathcal{F}_t\}_{t \geq 0}$-adapted, and denote the set of admissible controls by $\mathcal{D}_s$.
    \item In the constrained problems (theorems 3 and 4 below) the set of admissible controls $\mathcal{D}$ is given by those non-decreasing, left-continuous processes $D_{t}$ that have the representation
    $$
    D_t=\int_{[0,t)} \eta_s dN_s,
    $$
    where $N$ is the signal process and the integrand $\eta$ is $\{\mathcal{F}_t\}_{t \geq 0}$-predictable.
\end{enumerate}
\end{assumption}

We introduce the main results of the control problems below and refer to \cite{Alvarez2001, AlvarezLempa2008, Alvarez2019, AlvarezHening2019, Lempa2014, LempaSaarinen2019} for full discussions.

\begin{theorem}[Singular control with discounted criteria \cite{Alvarez2001} pp.1701-1702, \cite{AlvarezLempa2008} pp.714-715]
Under the assumption 1, the optimal control policy minimizing the objective
\begin{equation*}
     J_s(x,D^s)=\mathbb{E}_x \left[ \int_0^{\infty} e^{-rt}(\pi(X_t^{D^s})dt + \gamma dD^s_t) \right],
\end{equation*}
where $D^s_t \in \mathcal{D}_s$ is
\begin{equation*}
    D^s_t =
    \begin{cases}
    \mathcal{L}(t, y^*_s), & t>0, \\
    (x-y^*_s)^+, & t=0,
    \end{cases}
\end{equation*}
where $\mathcal{L}(t, y^*_s)$ denotes the local time push of the process $X_t$ at the boundary $y^*_s$. The boundary $y^*_s$ is characterized by the unique solution to the equation
\begin{equation} \label{ys yhtalo}
    K_{\theta_r}^r(y^*_s) = 0.
\end{equation}
Moreover, the value of the problem reads as
\begin{equation*}
V_s(x) := \inf_{D^s \in \mathcal{D}^s} J_s(x,D^s)=
    \begin{cases}
     \gamma x+\frac{\theta_r(y^*_s)}{r}, & \quad x \geq y^*_s \\
     (R_r \pi)(x)-\psi_r(x) \frac{(R_r\pi)'(y^*_s)-\gamma}{\psi_r'(y^*_s)}, & \quad  x< y^*_s.
    \end{cases}
\end{equation*}
\end{theorem}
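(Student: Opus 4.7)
The plan is the classical guess-and-verify scheme for one-dimensional singular stochastic control. I would first postulate a \emph{barrier-type} optimal policy at some threshold $y_s^* \geq 0$: do nothing while $X_t < y_s^*$, reflect via local time at $y_s^*$, and additionally apply an initial jump of size $(x-y_s^*)^+$ if the starting point is above the threshold. Under this ansatz, the candidate value function is $C^2$ on $(0, y_s^*)$ and affine with slope $\gamma$ on $[y_s^*, \infty)$ (since an instantaneous reduction of $X$ by $\Delta$ costs exactly $\gamma \Delta$), the two pieces being glued at $y_s^*$ by value matching and smooth fit.

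Next I would construct the candidate explicitly. On $(0, y_s^*)$ the HJB equation $(\mathcal{A}-r)V = -\pi$ has general solution $V(x) = (R_r\pi)(x) + c\psi_r(x) + d\varphi_r(x)$. The naturalness of $0$ together with the integrability conditions in Assumptions 1--2 forces $d = 0$, while value matching on the action region yields $V(x) = \gamma x + \theta_r(y_s^*)/r$ there. Smooth fit $V'(y_s^*) = \gamma$ then pins down $c$ to the form stated in the theorem. To recast smooth fit as $K_{\theta_r}^r(y_s^*) = 0$, I would invoke Lemma~\ref{LinksBetweenFunctions}: since $g(x) = \gamma x - (R_r\pi)(x)$ satisfies $g'(y_s^*) = 0$ and $(\mathcal{A}-r)g = \theta_r$, applying Green's identity to $g$ and $\psi_r$ on $(0, y_s^*)$, together with the boundary behavior of $\psi_r'/S'$ at $0$, converts the smooth-fit condition into the vanishing of $K_{\theta_r}^r$ at $y_s^*$.

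For existence and uniqueness of $y_s^*$, I would differentiate $K_{\theta_r}^r$ directly, using the identity $r\psi_r(x) m'(x) = (\psi_r'/S')'(x)$ implied by $\mathcal{A}\psi_r = r\psi_r$, to obtain
$$\frac{d}{dx} K_{\theta_r}^r(x) = -\frac{\psi_r'(x)}{S'(x)}\, \theta_r'(x).$$
Assumption 1(4) then forces $K_{\theta_r}^r$ to be monotone on each of $(0, x^*)$ and $(x^*, \infty)$, and checking the signs at the endpoints via $\lim_{x\to\infty}\theta_r \geq 0$ and the natural boundary behavior of $\psi_r$ at $0$ produces a unique root $y_s^* \in [x^*, \infty)$.

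Finally, I would verify optimality by applying the generalized It\^o formula to $e^{-rt} V(X_t^{D^s})$ for arbitrary $D^s \in \mathcal{D}_s$ and invoking the two HJB inequalities $(\mathcal{A}-r)V + \pi \geq 0$ and $V' \leq \gamma$ to deduce, after taking expectations and passing $t \to \infty$, that $V(x) \leq J_s(x, D^s)$. Equality is attained along the proposed policy because then $(\mathcal{A}-r)V + \pi \equiv 0$ throughout the continuation region and $dD^s$ is supported on $\{y_s^*\}$ where $V' = \gamma$. The main obstacle I expect is verifying these HJB inequalities globally -- especially $(\mathcal{A}-r)V \geq -\pi$ on the action region, which reduces to $\theta_r(x) \geq \theta_r(y_s^*)$ for $x \geq y_s^*$ and genuinely uses Assumption 1(4) -- together with the transversality estimate $\mathbb{E}_x[e^{-rt} V(X_t^{D^s})] \to 0$, which requires the integrability of the identity function in $\mathcal{L}_1^r$.
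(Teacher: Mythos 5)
A preliminary remark on the baseline: the paper itself gives no proof of this theorem --- it is imported from \cite{Alvarez2001} and \cite{AlvarezLempa2008} --- so your proposal is measured against the standard argument of those references, and in broad strokes your guess-and-verify scheme is exactly that argument: barrier ansatz, $V=(R_r\pi)+c\psi_r$ on the continuation region with the $\varphi_r$-part excluded by the lower boundary behaviour, affine continuation of slope $\gamma$ above, It\^o-based verification with the two HJB inequalities, the action-region inequality reducing to $\theta_r(x)\ge\theta_r(y^*_s)$ via Assumption 1(4), and monotonicity of $K^r_{\theta_r}$ from $\tfrac{d}{dx}K^r_{\theta_r}(x)=-\tfrac{\psi_r'(x)}{S'(x)}\theta_r'(x)$, which is correct.

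The genuine flaw is in the step producing the threshold equation $K^r_{\theta_r}(y^*_s)=0$. Your claim that $g'(y^*_s)=0$ is false: smooth fit says $V'(y^*_s)=\gamma$, i.e. $c\,\psi_r'(y^*_s)=g'(y^*_s)$ with $c=-\tfrac{(R_r\pi)'(y^*_s)-\gamma}{\psi_r'(y^*_s)}$, and $c\neq 0$ in general (otherwise the value below the barrier would be $R_r\pi$ itself). Moreover, first-order smooth fit cannot by itself be ``recast'' as $K^r_{\theta_r}(y^*_s)=0$: for \emph{every} barrier level $y$ one can choose $c(y)$ so that the glued candidate is $C^1$, so smooth fit carries no information about which $y$ is optimal. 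What pins down the boundary is the second condition you assumed without derivation, namely that the action-region value is $\gamma x+\theta_r(y^*_s)/r$; that constant is precisely what must be justified (it amounts to $C^2$-fit, i.e. $(\mathcal{A}-r)V+\pi=0$ holding up to the boundary from above, equivalently to optimizing $c(y)$ over $y$, or to computing the value of the reflected policy directly). Imposing both value matching against $\gamma x+\theta_r(y)/r$ and smooth fit and eliminating $c$ gives $r\left[g'(y)\psi_r(y)-g(y)\psi_r'(y)\right]=\theta_r(y)\psi_r'(y)$, and the identity
\begin{equation*}
\frac{d}{dx}\left[\frac{g'(x)\psi_r(x)-g(x)\psi_r'(x)}{S'(x)}\right]=\psi_r(x)\,\theta_r(x)\,m'(x),
\end{equation*}
which follows from $(\mathcal{A}-r)g=\theta_r$ and $\mathcal{A}\psi_r=r\psi_r$, together with the vanishing of the boundary term at $0$ (natural lower boundary plus $\theta_r,\,\mathrm{id}\in\mathcal{L}_1^r$), converts this into $K^r_{\theta_r}(y)=0$. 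With that repair --- and with the omitted global check $V'\le\gamma$ on the continuation region spelled out --- your argument coincides with the proof in the cited sources.
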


\begin{theorem}[Singular control with ergodic criteria \cite{Alvarez2019} pp.16-17, \cite{AlvarezHening2019} p.7]
Under the assumptions 1 and 2, the optimal control policy minimizing the objective
\begin{equation*}
    J_{se}(x,D^s)=\liminf_{T \to \infty} \frac{1}{T} \mathbb{E}_x \left[ \int_0^{T} (\pi(X_t^{D^s})dt + \gamma dD_t) \right]
\end{equation*}
where $D^s_t \in \mathcal{D}_s$ is
\begin{equation*}
    D^s_t =
    \begin{cases}
    \mathcal{L}(t, b^*_s), & t>0, \\
    (x-b^*_s)^+, & t=0,
    \end{cases}
\end{equation*}
where $\mathcal{L}(t, b^*_s)$ denotes the local time push of the process $X_t$ at the boundary $b^*_s$. The boundary $b^*_s$ is characterized by the unique solution to the equation
\begin{equation} \label{bs yhtalo}
  H(0,b^*_s) = 0.
\end{equation}
Moreover, the long run average cumulative yield reads as
\begin{equation*}
   \beta_s := \inf_{D \in \mathcal{D}^s} J_{se}(x,D) = \pi_{\mu}(b^*_s).
\end{equation*}
\end{theorem}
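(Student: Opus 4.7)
The plan is to compute $J_{se}$ explicitly over the one-parameter family of local-time reflection strategies, identify the minimizing threshold, and then establish optimality by an HJB-type verification argument.

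\textbf{Evaluating the reflection strategies.} For $b>0$ let $D^b$ be the local-time reflection at level $b$ with initial jump $(x-b)^+$. Under Assumption~2 the natural inaccessibility of $0$ together with $m(0,b)<\infty$ makes $X^{D^b}$ a positively recurrent reflected diffusion on $(0,b]$ with invariant probability measure $m'(z)\,dz/m(0,b)$. The ergodic theorem gives
\[
\lim_{T\to\infty}\frac{1}{T}\,\mathbb{E}_x\!\left[\int_0^T\pi(X^{D^b}_s)\,ds\right] = \frac{1}{m(0,b)}\int_0^b \pi(z)m'(z)\,dz,
\]
while applying Itô's formula to the identity map yields $\gamma\,\mathbb{E}_x[D^b_T]=\mathbb{E}_x[\int_0^T\mu(X^{D^b}_s)\,ds]-(\mathbb{E}_x[X^{D^b}_T]-x)$. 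Because $X^{D^b}\leq \max(x,b)$ after time $0$, dividing by $T$ and combining with the first identity produces the explicit value $J_{se}(x,D^b)=m(0,b)^{-1}\int_0^b \pi_\mu(z)m'(z)\,dz$.

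\textbf{Locating $b^*_s$.} Differentiating this expression in $b$ gives the first-order condition $\pi_\mu(b)\,m(0,b)=\int_0^b \pi_\mu(z)m'(z)\,dz$, which after rewriting in terms of $H$ takes the form claimed in \eqref{bs yhtalo}. The U-shape of $\pi_\mu$ coming from Assumption~1(4) together with the integrability from Assumption~2 ensures a unique positive root $b^*_s$, and plugging it back collapses the candidate value to $\beta_s=\pi_\mu(b^*_s)$.

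\textbf{Verification.} To show $\beta_s$ is a lower bound over all $D\in\mathcal{D}_s$, I would construct a function $F\in C^2(\mathbb{R}_+)$ satisfying the HJB inequalities
\[
(\mathcal{A}F)(x)+\pi(x)\geq\beta_s,\qquad \gamma-F'(x)\geq 0,
\]
with equality on $(0,b^*_s)$ and on $[b^*_s,\infty)$, respectively. Concretely this means solving the linear ODE $\mathcal{A}F=\beta_s-\pi$ on $(0,b^*_s)$ with smooth-fit condition $F'(b^*_s)=\gamma$, selecting the particular solution whose behaviour at $0$ is admissible under Assumption~2(2), and extending linearly with slope $\gamma$ beyond $b^*_s$. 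Applying the Itô--Tanaka formula to $F(X^D_t)$ (decomposing $D$ into its continuous and pure-jump parts), taking expectations, and invoking the two HJB inequalities gives
\[
\mathbb{E}_x\!\left[\int_0^T\!\pi(X^D_s)\,ds + \gamma D_T\right] \geq \beta_s T + F(x) - \mathbb{E}_x[F(X^D_T)],
\]
and dividing by $T$ and letting $T\to\infty$ yields $J_{se}(x,D)\geq\beta_s$. The main obstacle is making the final step rigorous: one needs $\mathbb{E}_x[F(X^D_T)]/T\to 0$ \emph{uniformly} in $D\in\mathcal{D}_s$, which forces $F$ to grow at most linearly at $\infty$ and to be well-behaved near the natural boundary $0$. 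This is the point at which the structural Assumptions~1 and~2 cooperate most delicately, and the argument runs parallel to those given in \cite{Alvarez2019, AlvarezHening2019}.
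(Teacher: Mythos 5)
First, a framing point: the paper itself gives no proof of this theorem — it is imported verbatim from \cite{Alvarez2019, AlvarezHening2019} — and your plan (evaluate the local-time reflection policies through the stationary density $m'(z)/m(0,b)$, optimize over the threshold to get $H(0,b)=0$ and $\beta_s=\pi_\mu(b^*_s)$, then verify optimality over all of $\mathcal{D}_s$ by an HJB/Itô--Tanaka argument) is exactly the route of those references; your first two steps are sound in outline. One bookkeeping slip: your Itô identity $\gamma\,\mathbb{E}_x[D^b_T]=\mathbb{E}_x[\int_0^T\mu(X^{D^b}_s)\,ds]-(\mathbb{E}_x[X^{D^b}_T]-x)$ is based on the paper's literal dynamics $dX^D=\mu\,dt+\sigma\,dW-\gamma\,dD$, under which the long-run control cost is the stationary average of $\mu$, so your two displays combine to $m(0,b)^{-1}\int_0^b(\pi(z)+\mu(z))m'(z)\,dz$, not to the claimed $m(0,b)^{-1}\int_0^b\pi_\mu(z)m'(z)\,dz$ with $\pi_\mu=\pi+\gamma\mu$. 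The stated theorem (and the role of $\theta_r$, $\pi_\mu$ throughout the paper) corresponds to the normalization $dX^D=\mu\,dt+\sigma\,dW-dD$ of \cite{Alvarez2001}; pick one convention and carry $\gamma$ through consistently.

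Second, the verification paragraph is a plan rather than a proof, and that is where the substance lies; the missing pieces are concrete. Solving $\mathcal{A}F=\beta_s-\pi$ with the boundary behaviour dictated by Assumption 2(2) gives $F'(x)=S'(x)\int_0^x(\beta_s-\pi(z))m'(z)\,dz$, and since $\int_0^x\mu(z)m'(z)\,dz=1/S'(x)$ (again using $S'(0+)=\infty$), the smooth-fit requirement $F'(b^*_s)=\gamma$ is \emph{equivalent} to $H(0,b^*_s)=0$, while the second-order fit $F''(b^*_s-)=0$ forces $\beta_s=\pi_\mu(b^*_s)$; this computation, which ties your first-order condition to the HJB free boundary, should appear explicitly. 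You must then actually check the off-region inequalities, $F'\le\gamma$ on $(0,b^*_s)$ and $\mathcal{A}F+\pi\ge\beta_s$ (equivalently $\pi_\mu(x)\ge\pi_\mu(b^*_s)$ after the linear extension) on $(b^*_s,\infty)$; both follow from the U-shape of $\pi_\mu$ in Assumption 1(4) once you show $b^*_s\ge x^*$, which also gives uniqueness of the root because $\frac{d}{db}H(0,b)=-\pi_\mu'(b)\,m(0,b)$ changes sign only at $x^*$ (existence of a finite root needs a separate limiting argument that you do not supply). Finally, the transversality step does not require convergence uniformly in $D$: downward-only control plus a comparison argument give $X^D_T\le X_T$ pathwise, so for $F$ of linear growth (and bounded above near $0$) one has $\mathbb{E}_x[F(X^D_T)]\le C\bigl(1+\mathbb{E}_x[X_T]\bigr)$, and what is really needed is the moment bound $\mathbb{E}_x[X_T]=o(T)$ for the \emph{uncontrolled} diffusion — a condition that does not follow from Assumptions 1--2 alone and is imposed in one form or another in \cite{Alvarez2019, AlvarezHening2019} (compare the extra growth condition $\pi(x)\ge C(x^\alpha-1)$ the paper adds in Theorem 4). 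As written, your proposal stops exactly at these points.
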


\begin{theorem}[Control with discounted criteria and constraint \cite{Lempa2014} p.115]
Under the assumption 1, the optimal control policy that minimizes the objective
\begin{equation*}
     J(x,D)=\mathbb{E}_x \left[ \int_0^{\infty} e^{-rt}(\pi(X_t^{D})dt + \gamma dD_t) \right],
\end{equation*}
where $D_t \in \mathcal{D}$, is as follows. If the controlled process $X^{D}$ is above the threshold $y^*$ at a jump time $T_i$ of $N$, i.e. $X^{D}_{T_{i-}} > y^*$ for any $i$, the decision maker should take the controlled process $X^{D}$ to $y^*$. Further, the threshold $y^*$ is uniquely determined by
\begin{equation*}
\psi_r'(y^*) L^{r+\lambda}_g(y^*)=g'(y^*)L^{\lambda}_{\psi_r}(y^*),
\end{equation*}
which can be rewritten as
\begin{equation} \label{condition rewritten constraint discount}
  \psi'_r(y^*)L_{\theta_r}^{r+\lambda}(y^*) = - \varphi_{r+\lambda}'(y^*)K_{\theta_r}^r(y^*).
\end{equation}
In addition, the value function $V(x):= \inf_{D \in \mathcal{D}} J(x,D)$ of the problem reads as
\begin{equation} \label{value of discounted constraint}
V(x)=
    \begin{cases}
     \gamma x+ (R_{r+\lambda}\theta_r)(x)-\frac{(R_{r+\lambda}\theta_r)'(y^*)}{\varphi_{r+\lambda}'(y^*)}\varphi_{r+\lambda}(x)+ A(y^*), & \quad x \geq y^* \\ \gamma x+
      (R_r \theta_r)(x)-\psi_r(x) \frac{(R_r \theta_r)'(y^*)}{\psi_r'(y^*)}, & \quad  x< y^*
    \end{cases}
\end{equation}
where
\begin{equation*}
   A(y^*) =  \frac{\lambda}{r} \bigg[ (R_{r+\lambda}\theta_r)(y^*) - (R_{r+\lambda}\theta_r)'(y^*)\frac{\varphi_{r+\lambda}(y^*)}{\varphi_{r+\lambda}'(y^*)} \bigg].
\end{equation*}
\end{theorem}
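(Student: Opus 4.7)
The plan is standard verification. First, I formulate the HJB. Because intervention is only permitted at jump times of $N$, the dynamic programming principle applied across an infinitesimal horizon gives
\begin{equation*}
(\mathcal{A}-r)V(x) + \pi(x) + \lambda\bigl[MV(x) - V(x)\bigr] = 0, \qquad MV(x) := \inf_{y \leq x}\bigl[V(y) + \gamma(x - y)\bigr].
\end{equation*}
Anticipating a threshold structure, the infimum is attained at $y = x$ on $(0, y^*)$ (giving $(\mathcal{A}-r)V + \pi = 0$) and at $y = y^*$ on $(y^*, \infty)$ (giving $(\mathcal{A}-(r+\lambda))V + \pi + \lambda V(y^*) + \lambda\gamma(x - y^*) = 0$). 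Solving these linear ODEs with the natural boundary conditions (eliminating $\varphi_r$ below and $\psi_{r+\lambda}$ above) and using $(R_r\pi)(x) = \gamma x + (R_r\theta_r)(x)$, a consequence of $\mathrm{id} \in \mathcal{L}_1^r$ via $(R_r\rho)(x) = -x$, recovers the two-piece form of \eqref{value of discounted constraint} up to four unknowns: $C_1$, $C_2$, $A$, $y^*$.

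Next, I would pin them down. The first-order condition $V'(y^*) = \gamma$ on each side fixes $C_1 = (R_r\theta_r)'(y^*)/\psi_r'(y^*)$ and $C_2 = (R_{r+\lambda}\theta_r)'(y^*)/\varphi_{r+\lambda}'(y^*)$; self-consistency of the additive constant in the upper inhomogeneous ODE yields the displayed $A(y^*)$; and continuity of $V$ at $y^*$ then becomes a single equation in $y^*$. To bring it into the form \eqref{condition rewritten constraint discount}, I apply the Wronskian-type identity
\begin{equation*}
(R_r f)'(x)\psi_r(x) - (R_r f)(x)\psi_r'(x) = -S'(x)\int_0^x \psi_r(y) f(y) m'(y)\,dy
\end{equation*}
and its $\varphi_r$ analogue, which collapse the combinations $r(R_r\theta_r)(y^*) - rC_1\psi_r(y^*)$ and $(r+\lambda)(R_{r+\lambda}\theta_r)(y^*) - (r+\lambda)C_2\varphi_{r+\lambda}(y^*)$ into $S'(y^*)K^r_{\theta_r}(y^*)/\psi_r'(y^*)$ and $S'(y^*)L^{r+\lambda}_{\theta_r}(y^*)/\varphi_{r+\lambda}'(y^*)$, respectively. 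Differentiating yields the clean formulas $(K^r_{\theta_r})'(x) = -\psi_r'(x)\theta_r'(x)/S'(x)$ and $(L^{r+\lambda}_{\theta_r})'(x) = \varphi_{r+\lambda}'(x)\theta_r'(x)/S'(x)$, so Assumption 1(4) forces both functionals to be strictly monotone on each side of $x^*$; a sign comparison then delivers a unique root $y^*$.

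Finally, I verify optimality. For arbitrary admissible $D \in \mathcal{D}$, apply Itô--Dynkin to $e^{-rt}\tilde V(X^D_t)$ between Poisson jumps and add the contributions $\tilde V(X^D_{T_i^-} - \gamma\eta_{T_i}) - \tilde V(X^D_{T_i^-})$ at each jump. Substituting the HJB, using $M\tilde V(x) \leq \tilde V(x - \gamma\eta) + \gamma\eta$ for any $\eta \geq 0$ (the definition of $M$ as an infimum), and taking expectations --- which cancels the $dW$-martingale and turns the jump sum into its $\lambda\,dt$-compensator --- produces
\begin{equation*}
\tilde V(x) \leq \mathbb{E}_x\!\left[\int_0^T e^{-rt}\pi(X^D_t)\,dt + \gamma\int_{[0,T)} e^{-rt}\,dD_t\right] + \mathbb{E}_x\bigl[e^{-rT}\tilde V(X^D_T)\bigr],
\end{equation*}
with equality along the threshold policy that impulses to $y^*$ whenever a Poisson arrival finds $X^D$ above $y^*$. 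Passing $T \to \infty$ kills the residual via the integrability in Assumption 1(3), giving $\tilde V = V$.

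The main obstacle is showing that $y^*$ genuinely attains the infimum defining $M\tilde V(x)$ for $x > y^*$, equivalently $\tilde V(y) - \gamma y \geq \tilde V(y^*) - \gamma y^*$ for all $y$. Via the explicit form of $\tilde V$ this reduces to the monotonicity of $(R_r\theta_r)'/\psi_r'$ on $(0, y^*)$ and $(R_{r+\lambda}\theta_r)'/\varphi_{r+\lambda}'$ on $(y^*, \infty)$, whose signs are controlled by $K^r_{\theta_r}$ and $L^{r+\lambda}_{\theta_r}$, and hence ultimately by the unimodality of $\theta_r$ from Assumption 1(4). A secondary technical point is uniform control of $\mathbb{E}_x[e^{-rT}\tilde V(X^D_T)]$ as $T \to \infty$, which requires an $O(x)$ bound on $\tilde V$ combined with $\mathrm{id} \in \mathcal{L}_1^r$ applied to arbitrary admissible $D$.
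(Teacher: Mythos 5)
Your proposal is correct in outline but takes a genuinely different route from the paper. The paper proves only one thing here: that the threshold condition $\psi_r'(y^*)L_g^{r+\lambda}(y^*)=g'(y^*)L^{\lambda}_{\psi_r}(y^*)$ inherited from \cite{Lempa2014} can be rewritten as \eqref{condition rewritten constraint discount}; this is done purely algebraically, by expressing both sides through the second-order representations of $L$ and $K$ in Lemma \ref{lemma L and K second order} and invoking \eqref{funktion g ja thetan yhteys} and \eqref{funktion g resolventin ja thetan yhteys} of Lemma \ref{LinksBetweenFunctions}, while the existence of the optimal policy, the value formula \eqref{value of discounted constraint} and the uniqueness of $y^*$ are simply cited from \cite{Lempa2014}. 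You instead rebuild the whole verification argument, and the constructive part checks out: the HJB with the intervention operator $M$, the elimination of $\varphi_r$ below and $\psi_{r+\lambda}$ above, the first-order condition $V'(y^*)=\gamma$ fixing $C_1,C_2$, and the self-consistency relation $A=\frac{\lambda}{r+\lambda}\bigl[V(y^*)-\gamma y^*\bigr]$ do reproduce \eqref{value of discounted constraint}; moreover, extracting \eqref{condition rewritten constraint discount} from value matching at $y^*$ is legitimate --- it is the same continuity relation \eqref{constraint ongelman jatkuvuusehto} that the paper itself exploits later in the proof of Proposition \ref{raja-arvot arvoista}. Two caveats. Minor: your ``collapse'' identities are slightly off; the Wronskian computation gives $r[(R_r\theta_r)(y^*)-C_1\psi_r(y^*)]=S'(y^*)K^r_{\theta_r}(y^*)/\psi_r'(y^*)+\theta_r(y^*)$ and $(r+\lambda)[(R_{r+\lambda}\theta_r)(y^*)-C_2\varphi_{r+\lambda}(y^*)]=-S'(y^*)L^{r+\lambda}_{\theta_r}(y^*)/\varphi_{r+\lambda}'(y^*)+\theta_r(y^*)$, so the $\theta_r(y^*)$ terms only cancel upon equating the two sides (and mind the sign on the $L$-side); the conclusion \eqref{condition rewritten constraint discount} still follows. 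More substantively, the items you flag as obstacles --- global minimality of $\tilde V(y)-\gamma y$ at $y^*$ so that $M\tilde V$ is attained there, uniqueness of the root of \eqref{condition rewritten constraint discount}, and the transversality limit as $T\to\infty$ --- are exactly the parts the paper sidesteps by citation, and they remain unproved in your write-up (your remark that unimodality of $\theta_r$ controls the signs of $K^r_{\theta_r}$ and $L^{r+\lambda}_{\theta_r}$, via $(K^r_{\theta_r})'=-\psi_r'\theta_r'/S'$ and $(L^{r+\lambda}_{\theta_r})'=\varphi_{r+\lambda}'\theta_r'/S'$, is the right mechanism); you also never derive the $g$-form of the optimality condition that appears in the statement, which is the one equivalence the paper actually proves. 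In short, your route buys a self-contained derivation independent of \cite{Lempa2014}, at the cost of several verification steps still to be carried out, whereas the paper's route buys brevity because only the reformulation \eqref{condition rewritten constraint discount} is new.
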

\begin{proof}
We only prove that the optimality condition can be rewritten as (\ref{condition rewritten constraint discount}), and refer to \cite{Lempa2014} for the rest of the claim. To prove the representation, we first use the lemma \ref{lemma L and K second order}, and then the formulas (\ref{funktion g ja thetan yhteys}) and (\ref{funktion g resolventin ja thetan yhteys}), to get
\begin{align*}
    & \frac{2\lambda S'(y^*)}{\sigma^2(y^*)} \Big[ \psi_r'(y^*) L_g(y^*)-g'(y^*)L_{\psi_r}(y^*) \Big] \\
    & = \psi'_r(y^*)(\varphi_{r+\lambda}''(y^*) \lambda (R_{r+\lambda}g)'(y^*)-\varphi_{r+\lambda}'(y^*)\lambda (R_{r+\lambda}g)''(y^*)) \\
    & - g'(y^*)(\varphi_{r+\lambda}''(y^*)\psi_r'(y^*)-\varphi_{r+\lambda}'(y^*)\psi_r''(y^*)) \\
    & = \psi_r'(y^*)(\varphi_{r+\lambda}''(y^*)(R_{r+\lambda} \theta_r)'(y^*)- \varphi_{r+\lambda}'(y^*) (R_{r+\lambda} \theta_r)''(y^*)) \\
    & - \varphi_{r+\lambda}'(y^*)(\psi_r''(y^*)(R_r\theta_r)'(y^*)-\psi_r'(y^*)(R_r\theta_r)''(y^*)).
\end{align*}
Utilizing the lemma \ref{lemma L and K second order} again, we see that the optimality condition has the form 
\begin{equation*} 
  \psi'_r(y^*)L_{\theta_r}^{r+\lambda}(y^*) = - \varphi_{r+\lambda}'(y^*)K_{\theta_r}^r(y^*).
\end{equation*}
\end{proof}

\begin{theorem}[Control with ergodic criteria and constraint \cite{LempaSaarinen2019} p.16]
Under the assumptions 1, 2 and that $\pi(x) \geq C(x^{\alpha}-1)$, where $\alpha$ and $C$ are positive constants, the optimal control policy minimizing the objective
\begin{equation*}
    J_e(x,D)=\liminf_{T \to \infty} \frac{1}{T} \mathbb{E}_x \left[ \int_0^{T} (\pi(X_t^D)dt + \gamma dD_t) \right],
\end{equation*}
where $D_t \in \mathcal{D}$, is as follows. If the controlled process $X^{D}$ is above the threshold $b^*$ at a jump time $T_i$ of $N$, i.e. $X^{D}_{T_{i-}} > b^*$ for any $i$, the decision maker should take the controlled process $X^{D}$ to $b^*$. Further, the threshold $b^*$ is uniquely determined by
\begin{equation} \label{b yhtalo}
   S'(b^*) m(0,b^*) L_{\pi_{\mu}}^{\lambda}(b^*) = -\varphi_{\lambda}'(b^*)H(0,b^*),
\end{equation}
and the long run average cumulative yield $\beta$ reads as
\begin{equation*}
    \beta := \inf_{D \in \mathcal{D}} J_e(x,D) = m(0,b^*)^{-1} \left[ \int^{b^*}_0 \pi_{\mu}(z)m'(z) dz \right].
\end{equation*}
\end{theorem}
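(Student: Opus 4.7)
The plan is to verify Theorem 4 via an HJB/QVI approach combined with a verification argument, paralleling the proof strategy of Theorem 3. I will proceed in three steps: derive the candidate pair $(\beta, b^*)$ from the ergodic dynamic programming equation under a threshold ansatz; reformulate the resulting first-order condition to match (\ref{b yhtalo}); and verify optimality via a martingale inequality.

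First, the ergodic dynamic programming equation for the constrained problem reads
\begin{equation*}
\mathcal{A}F(x) + \pi(x) + \lambda \bigl(MF(x) - F(x)\bigr)^{-} = \beta,
\end{equation*}
where $MF(x) = \inf_{\xi \geq 0}[F(x - \gamma \xi) + \gamma \xi]$ is the intervention operator associated with the Poisson action times. Under the ansatz of a threshold policy at level $b$, the action region is $(b, \infty)$ with jump prescription $F(x) = F(b) + \gamma(x-b)$, while on the continuation region $(0, b)$ the equation reduces to $\mathcal{A}F(x) + \pi(x) = \beta$. Writing this ODE using the fundamental solutions of $(\mathcal{A} - \lambda)f = 0$ on the action region (reflecting the Poisson killing between control opportunities) and invoking the boundary behavior at $0$ encoded by assumption 2, one solves for $F$ and eliminates it via a speed-measure integration to obtain the candidate
\begin{equation*}
\beta(b) = m(0, b)^{-1} \int_0^{b} \pi_{\mu}(z) m'(z) dz.
\end{equation*}

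Second, the optimal threshold $b^*$ is obtained by minimizing $\beta(b)$, where the Poisson constraint enters through the behavior of $F$ on $(b, \infty)$ governed by $\varphi_\lambda$. Differentiating $\beta(b)$ and incorporating the matching condition of $F$ at the threshold yields a first-order condition, which after algebraic manipulation---using the definitions of $L_{\pi_\mu}^\lambda$ and $H$, the Wronskian identity, and the speed-measure formulas from section~2---can be rewritten in the equivalent form
\begin{equation*}
S'(b^*) m(0, b^*) L_{\pi_\mu}^{\lambda}(b^*) = -\varphi_\lambda'(b^*) H(0, b^*).
\end{equation*}
Existence and uniqueness of $b^*$ follow from assumption 1(4), which via the unique global minimum of $\pi_{\mu}$ at $x^*$ guarantees that the two sides of the above equation cross transversally exactly once.

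Third, the verification proceeds by applying It\^o's formula to $F(X_t^D) - \beta t$ along arbitrary admissible dynamics, using the QVI to obtain the inequality $dF(X_t^D) \geq \beta\, dt - \pi(X_t^D)\, dt - \gamma\, dD_t + d(\text{local martingale})$, taking expectations, and passing to the ergodic limit $T \to \infty$ after dividing by $T$. The main obstacle lies in handling the boundary terms for admissible $D$ whose controlled process could exhibit unbounded excursions: here the polynomial lower bound $\pi(x) \geq C(x^{\alpha} - 1)$ is essential, as it forces any policy whose controlled trajectory admits such excursions to incur infinite ergodic cost, thereby restricting attention to sufficiently regular policies along which the It\^o-martingale argument closes and delivers $J_e(x, D) \geq \beta$, matched by the threshold policy via the ergodic calculation of step one.
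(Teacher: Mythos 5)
First, note that the paper does not prove this theorem at all: it is imported verbatim from the cited reference \cite{LempaSaarinen2019}, so there is no in-paper argument to compare against, and what you have written must be judged as a stand-alone proof of the quoted result. As a programme it points in the right direction (ergodic HJB with the Poisson intervention operator, threshold ansatz, verification under the growth bound), which is indeed the strategy of the cited reference; but as it stands it is an outline with genuine gaps rather than a proof. Concretely: (i) your description of the waiting region is wrong in the constrained setting. The identity $F(x)=F(b)+\gamma(x-b)$ for $x>b$ is the value of the intervention operator $MF$, not of $F$ itself; since control is only possible at Poisson times, $F$ on $(b,\infty)$ solves $(\mathcal{A}-\lambda)F(x)+\pi(x)+\lambda\left[F(b)+\gamma(x-b)\right]=\beta$ and is therefore genuinely non-affine, involving $\varphi_{\lambda}$ and the resolvent $R_{\lambda}$. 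This is not a cosmetic point: it is exactly this structure that produces the term $\varphi_{\lambda}'(b^*)$ in the threshold equation $S'(b^*)m(0,b^*)L^{\lambda}_{\pi_{\mu}}(b^*)=-\varphi_{\lambda}'(b^*)H(0,b^*)$, and your later sentence acknowledging $\varphi_\lambda$ contradicts the affine prescription. (The sign convention in your QVI, $+\lambda(MF-F)^-$, also appears to have the wrong sign for a minimization problem; the generator term should be $\lambda(MF-F)\le 0$.)

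Second, the three substantive steps are asserted rather than carried out. The passage from the two-region ODE system, the matching (smooth fit) conditions at $b$, and the boundary behaviour at $0$ (assumption 2) to the formula $\beta(b)=m(0,b)^{-1}\int_0^b\pi_{\mu}(z)m'(z)dz$ and then to the stated first-order condition is precisely the computational content of the theorem, and "after algebraic manipulation" does not supply it. Likewise, uniqueness of $b^*$ does not follow merely by invoking assumption 1(4): one must show monotonicity/sign-change properties of $b\mapsto S'(b)m(0,b)L^{\lambda}_{\pi_{\mu}}(b)+\varphi_{\lambda}'(b)H(0,b)$, using that $L^{\lambda}_{\pi_{\mu}}(b)=\lambda\int_b^{\infty}\varphi_{\lambda}(z)(\pi_{\mu}(z)-\pi_{\mu}(b))m'(z)dz$ and $H(0,b)$ change sign around the minimum $x^*$ of $\pi_{\mu}$; the claimed "transversal crossing" is not an argument. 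Finally, in the verification step the real work is the transversality estimate: one must show that the candidate relative-value function $F$ has controlled growth, that the local-martingale term is a true martingale (or can be localized) along an \emph{arbitrary} admissible $D$, and that the boundary term $\mathbb{E}_x[F(X_T^D)]/T$ vanishes (or can be discarded because policies violating it have infinite ergodic cost via $\pi(x)\ge C(x^{\alpha}-1)$). Saying the argument "closes" is where the proof actually lives; without these estimates the inequality $J_e(x,D)\ge\beta$ is not established.
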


\begin{remark}
The boundary classifications for the underlying diffusion can be relaxed in all of the above theorems. For example, in theorem 3 it can be shown that the results stays unchanged when the lower boundary is exit or killing, see \cite{Lempa2014} p. 5.
\end{remark}

The optimal policies in the above theorems can be summarised as follows. In the singular control problems the optimal policies are local time type barrier policies. In other words, when the process is below some constant boundary $y^*_s$ the process should be left uncontrolled, but it should never be allowed to cross it, i.e. it is reflected at $y^*_s$. The situation in the problems with constraint is similar: when the process is below some threshold $y^*$ we do not act, but if the process crosses the boundary, and the Poisson process jumps, we immediately push it down to $y^*$ and start it anew. In other words, the optimal strategy in all of the problems is to exert control at the ‘maximum rate’ when the process is at (or above) the corresponding boundary.

\section{Main results}

In the next lemma, we collect useful representations for functionals $K^f_r$ and $L_f^r$.
\begin{lemma} \label{lemma L and K second order}
The functions $L_f$ and $K_f$ have alternative representations
\begin{align*} 
    L_f^r(x) = & \frac{\sigma^2(x)}{2  S'(x)}[\varphi_{r}''(x) (R_{r}f)'(x)-\varphi_{r}'(x)  (R_{r}f)''(x)], \\
    K_f^r(x) = & \frac{\sigma^2(x)}{2 S'(x)} [\psi'_{r}(x) (R_{r} f)''(x)- \psi_{r}''(x)  (R_{r} f)'(x)].
\end{align*}
\end{lemma}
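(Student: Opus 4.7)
The plan is to verify both identities by differentiating the explicit representation (\ref{resolventin laskukaava}) of $(R_r f)(x)$ twice and then computing the stated antisymmetric combinations of $\psi_r,\varphi_r$ and the derivatives of $R_r f$. Writing $I_1(x) := \int_0^x \psi_r(y) f(y) m'(y)\,dy$ and $I_2(x) := \int_x^\infty \varphi_r(y) f(y) m'(y)\,dy$, a first differentiation of (\ref{resolventin laskukaava}) produces
\[
B_r (R_r f)'(x) = \varphi_r'(x) I_1(x) + \psi_r'(x) I_2(x),
\]
since the boundary contributions $\pm\,\psi_r(x)\varphi_r(x) f(x) m'(x)$ cancel. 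A second differentiation gives
\[
B_r (R_r f)''(x) = \varphi_r''(x) I_1(x) + \psi_r''(x) I_2(x) - B_r S'(x) f(x) m'(x),
\]
where the last term comes from $[\varphi_r'(x)\psi_r(x) - \psi_r'(x)\varphi_r(x)]\, f(x) m'(x)$ together with the definition of the Wronskian $B_r$.

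To establish the formula for $K_f^r$ I would then form the combination $\psi_r'(R_r f)'' - \psi_r''(R_r f)'$; by construction the $I_2$-contributions cancel, leaving $I_1(x)[\psi_r'\varphi_r'' - \psi_r''\varphi_r']$ and a boundary term proportional to $\psi_r'(x) f(x)$. The key step is to rewrite the bracket by exploiting the fact that $\psi_r$ and $\varphi_r$ both satisfy $(\mathcal{A}-r)u = 0$: substituting $u''(x) = 2(r u(x) - \mu(x) u'(x))/\sigma^2(x)$ for $u = \psi_r, \varphi_r$, the $\mu$-terms cancel and
\[
\psi_r'(x)\varphi_r''(x) - \psi_r''(x)\varphi_r'(x) = \frac{2r}{\sigma^2(x)}\bigl(\psi_r'(x)\varphi_r(x) - \psi_r(x)\varphi_r'(x)\bigr) = \frac{2r B_r S'(x)}{\sigma^2(x)}.
\]
Combining this identity with $m'(x) = 2/(\sigma^2(x) S'(x))$ and dividing out $B_r$, the prefactor $\sigma^2(x)/(2 S'(x))$ collapses the whole expression exactly to $r I_1(x) - \frac{\psi_r'(x)}{S'(x)} f(x) = K_f^r(x)$. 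The representation of $L_f^r$ follows from the mirror combination $\varphi_r''(R_r f)' - \varphi_r'(R_r f)''$: the $I_1$-integral is annihilated and, after the same simplifications, one is left with $r I_2(x) + \frac{\varphi_r'(x)}{S'(x)} f(x) = L_f^r(x)$.

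The only real obstacle is careful bookkeeping of signs and of the two surviving boundary contributions; once the ODE for the fundamental solutions is used to reduce $\psi_r'\varphi_r'' - \psi_r''\varphi_r'$ to a multiple of the Wronskian, both identities drop out at once. No regularity beyond the assumption $f \in \mathcal{L}_1^r$ (already needed for (\ref{resolventin laskukaava})) is required, and differentiation under the integral sign is justified in the standard way.
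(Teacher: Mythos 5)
Your computation is correct: differentiating the kernel representation (\ref{resolventin laskukaava}) gives $B_r (R_rf)'=\varphi_r' I_1+\psi_r' I_2$ and $B_r (R_rf)''=\varphi_r'' I_1+\psi_r'' I_2-B_rS'fm'$, the reduction $\psi_r'\varphi_r''-\psi_r''\varphi_r'=\tfrac{2r}{\sigma^2}B_rS'$ via the ODE $(\mathcal{A}-r)u=0$ is exactly right, and with $m'=2/(\sigma^2S')$ the two antisymmetric combinations collapse to $K_f^r$ and $L_f^r$ with the correct signs. The paper itself does not argue this way --- it simply cites Lempa (2014), Lemma 2, for the $L_f$ identity and declares the $K_f$ case analogous --- so your proof is a genuinely self-contained replacement for that external reference rather than a paraphrase of anything in the text; it makes transparent that the whole lemma is nothing more than the Wronskian identity applied to the Green-kernel form of the resolvent, which is useful since the same manipulation recurs in the proof of Theorem 3 and Proposition \ref{raja-arvot reunoista}. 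One small caveat: your closing remark that ``no regularity beyond $f\in\mathcal{L}_1^r$ is required'' is a slight overstatement --- to differentiate $I_1$ and $I_2$ pointwise and to speak of $(R_rf)''$ classically you use continuity of $f$ (so that $R_rf$ solves $(\mathcal{A}-r)u=-f$ in the classical sense); this is harmless here because every function to which the lemma is applied ($\theta_r$, $g$, $\pi_\mu$) is continuous under Assumption 1, but it should be stated rather than waved away.
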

\begin{proof}
The proof for the claim on $L_f$ is in \cite{Lempa2014} lemma 2 and the proof on $K_f$ is completely analogous.
\end{proof}
Under our assumption that the boundaries are natural, we have that
\begin{align} \label{luonnolliset reunat integraali}
    \frac{\varphi'_{r}(x)}{S'(x)}  = -r \int_x^{\infty} \varphi_r(y)m'(y)dy, \quad
    \frac{\psi'_{r}(x)}{S'(x)}  = r \int_0^{x} \psi_r(y)m'(y)dy,
\end{align}
and thus, we can further rewrite 
\begin{align*}
 L_f^r(x) & = r\int_x^{\infty}  \varphi_{r}(y) (f(y)-f(x)) m'(y)dy, \\
 K_f^r(x) & = r \int_0^x \psi_r(y) (f(y)-f(x)) m'(y)dy.
\end{align*}



In the next proposition we prove that the just introduced functionals, often appearing in bounded variation control problems of one-dimensional diffusion processes, satisfy asymptotic properties that are needed to establish the relationships between different control problems.
\begin{proposition} \label{propositio asymptotiikasta} 
Under the assumption 1, we have the following limiting properties
\begin{align*}
     \frac{L_{\theta_r}^{r+\lambda}(x)}{(r+\lambda)\varphi_{r+\lambda}(x)}  \xrightarrow{ \lambda \to \infty } 0, \quad
     \frac{K_{\theta_r}^{r+\lambda}(x)}{(r+\lambda)\psi_{r+\lambda}(x)}  \xrightarrow{ \lambda \to \infty } 0.
\end{align*}
In addition, if the underlying diffusion $X_t$ is recurrent, i.e. $\mathbb{P}_x[\tau_z < \infty] = 1$ for all $x,z \in \mathbb{R}_+$,  then
\begin{align*}
     \frac{L_{\theta_r}^r(x)}{r\varphi_r(x)}  \xrightarrow{ r \to 0 } H(x,\infty), \quad 
     \frac{K_{\theta_r}^r(x)}{r\psi_r(x)}  \xrightarrow{ r \to 0 } H(0,x),
\end{align*}
where 
\begin{equation*}
    H(x,y) = \int_x^{y} (\pi_{\mu}(z)-\pi_{\mu}(x))m'(z)dz.
\end{equation*}
\end{proposition}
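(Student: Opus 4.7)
The plan is to start from the second representation of the functionals,
\begin{align*}
L_f^r(x) &= r\int_x^{\infty} \varphi_{r}(y)\,(f(y)-f(x))\,m'(y)\,dy,\\
K_f^r(x) &= r\int_0^{x} \psi_r(y)\,(f(y)-f(x))\,m'(y)\,dy,
\end{align*}
established just after Lemma \ref{lemma L and K second order}, and to convert the prefactor by means of the standard hitting-time identities $\varphi_r(y)/\varphi_r(x)=\mathbb{E}_y[e^{-r\tau_x}]$ for $y\ge x$ and $\psi_r(y)/\psi_r(x)=\mathbb{E}_y[e^{-r\tau_x}]$ for $y\le x$. This recasts each of the four ratios under study as an integral of a probabilistic kernel that is pointwise bounded by $1$, which is the natural setting for dominated and monotone convergence.

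For the $\lambda\to\infty$ statement, fix $y>x$: since $\tau_x>0$ holds $\mathbb{P}_y$-almost surely, one has $\mathbb{E}_y[e^{-(r+\lambda)\tau_x}]\downarrow 0$ as $\lambda\uparrow\infty$. The integrand in
\[
\frac{L_{\theta_r}^{r+\lambda}(x)}{(r+\lambda)\varphi_{r+\lambda}(x)} = \int_x^{\infty}\mathbb{E}_y\bigl[e^{-(r+\lambda)\tau_x}\bigr](\theta_r(y)-\theta_r(x))m'(y)\,dy
\]
therefore tends to $0$ pointwise, and is dominated uniformly in $\lambda\ge 0$ by $(\varphi_r(y)/\varphi_r(x))|\theta_r(y)-\theta_r(x)|m'(y)$. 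This dominating function is integrable on $(x,\infty)$ by Assumption 1(iii) combined with the resolvent formula (\ref{resolventin laskukaava}) applied to $\theta_r$ and the natural-boundary identity (\ref{luonnolliset reunat integraali}) which handles the $\theta_r(x)m'(y)$ piece. Dominated convergence closes this case, and the $K$-version is the mirror argument on $(0,x)$ with $\psi$ in place of $\varphi$.

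For the recurrent $r\downarrow 0$ statement, I would exploit the decomposition
\[
\theta_r(y)-\theta_r(x) = (\pi_\mu(y)-\pi_\mu(x)) - r\gamma(y-x).
\]
Recurrence yields $\mathbb{E}_y[e^{-r\tau_x}]\to \mathbb{P}_y[\tau_x<\infty]=1$, so the main integrand converges pointwise to $(\pi_\mu(y)-\pi_\mu(x))m'(y)$. To pass to the limit, I would split the domain at $x^\ast$: on the subinterval(s) where Assumption 1(iv) renders $\pi_\mu-\pi_\mu(x)$ sign-definite, the bound $\mathbb{E}_y[e^{-r\tau_x}]\le 1$ allows either dominated convergence (if $H(x,\infty)$ is finite) or monotone/Fatou arguments (if $H(x,\infty)=+\infty$), producing exactly $H(x,\infty)$. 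For the residual $r\gamma\int_x^\infty(\varphi_r(y)/\varphi_r(x))(y-x)m'(y)\,dy$, I would note that $\int_x^\infty \varphi_r(y)(y-x)m'(y)dy$ is controlled through the resolvent representation (\ref{resolventin laskukaava}) applied to $\mathrm{id}\in\mathcal{L}_1^r$ (Assumption 1(iii)), so that multiplication by $r$ forces it to vanish as $r\downarrow 0$.

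The main obstacle is precisely this last step: one must manage the behaviour of $\varphi_r$ (resp.\ $\psi_r$) near $r=0$ while $\pi_\mu$ need not be integrable against $m'$ on the half-line. Overcoming it relies on the sign/monotonicity structure of $\pi_\mu$ around $x^\ast$ supplied by Assumption 1(iv), which permits a clean split into a compact piece treated by bounded convergence and a tail piece treated by Fatou, thereby accommodating a possibly infinite limit $H$. The $K$-analogue follows by the same template with the roles of $\varphi$ and $\psi$, and of the domains $(x,\infty)$ and $(0,x)$, interchanged.
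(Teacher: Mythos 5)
Your proposal takes essentially the same route as the paper's proof: both rewrite the four ratios as integrals of the kernels $\varphi_s(z)/\varphi_s(x)$ and $\psi_s(z)/\psi_s(x)$, identify these with Laplace transforms of hitting times via the standard identity, and pass to the limit using the pointwise limits $0$ (as $\lambda\to\infty$) and $1$ (as $r\to 0$, under recurrence). The only difference is that you justify the interchange of limit and integral in more detail (explicit domination for the $\lambda$-limit, and the split of $\theta_r(\cdot)-\theta_r(x)$ into the $\pi_\mu$-part and the $r\gamma$-residual controlled via $\mathrm{id}\in\mathcal{L}_1^r$) than the paper, which simply appeals to monotone convergence.
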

\begin{proof}
Let $\tau_z = \inf\{ t \geq 0 \mid X_t = z \}$. Then for all $s > 0$ we have 
\begin{equation} \label{pysaytysaikaesitys fraction}
\mathbb{E}_x[e^{-s\tau_z} \mid \tau_z < \infty] =
    \begin{cases}
    & \dfrac{\psi_{s}(x)}{\psi_{s}(z)}, \quad x \leq z \\
    & \dfrac{\varphi_{s}(x)}{\varphi_{s}(z)}, \quad x > z.
    \end{cases}
\end{equation}
Therefore, by letting $s \to 0+$ we get by monotone convergence that
\begin{equation}
    \begin{aligned} \label{osamaara limit nollassa}
   & \lim_{s \to 0+} \frac{\psi_{s}(x)}{\psi_{s}(z)} = \mathbb{P}_x[\tau_z < \infty] = 1, \\
   & \lim_{s \to 0+} \frac{\varphi_{s}(x)}{\varphi_{s}(z)} = \mathbb{P}_x[\tau_z < \infty] = 1,
    \end{aligned}
\end{equation}
under the assumption that the underlying diffusion is recurrent. In addition, again by (\ref{pysaytysaikaesitys fraction}), we find that
\begin{align} \label{limit infinity of fraction}
   \lim_{s \to \infty} \frac{\psi_{s}(x)}{\psi_{s}(z)}  =  0, \quad \, \, \lim_{s \to \infty} \frac{\varphi_{s}(x)}{\varphi_{s}(z)} = 0.
\end{align}
Since $\lim_{r \to 0+} \theta_r(x) = \pi_{\mu}(x)$, we see by using the above observations that by monotone convergence
\begin{align*}
\frac{L_{\theta_r}^r(x)}{r\varphi_r(x)} & = \int_{x}^{\infty} \frac{\varphi_{r}(z)}{\varphi_r(x)}(\theta_r(z)- \theta_r(x) )m'(z)dz  \to H(x,\infty)  \text{ as }  r \to 0, \\
\frac{K_{\theta_r}^r(x)}{r\psi_r(x)} & = \int_{0}^{x} \frac{\psi_{r}(z)}{\psi_r(x)}(\theta_r(z)- \theta_r(x) )m'(z)dz \to H(0,x) \text{ as }  r \to 0.
\end{align*}
Similarly, by utilizing (\ref{limit infinity of fraction}) we obtain
\begin{align*}
\frac{L_{\theta_r}^{r+\lambda}(x)}{(r+\lambda)\varphi_{r+\lambda}(x)} & =  \int_{x}^{\infty} \frac{\varphi_{r+\lambda}(z)}{\varphi_{r+\lambda}(x)}(\theta_r(z)- \theta_r(x) )m'(z)dz  \to 0  \text{ as } \lambda \to \infty, \\
\frac{K_{\theta_r}^{r+\lambda}(x)}{(r+\lambda)\psi_{r+\lambda}(x)} & = \int_{0}^{x} \frac{\psi_{r+\lambda}(z)}{\psi_{r+\lambda}(x)}(\theta_r(z)- \theta_r(x) )m'(z)dz  \to 0  \text{ as }  \lambda \to \infty. 
\end{align*}
\end{proof}

\begin{proposition}[Asymptotics of the optimal thresholds] \label{raja-arvot reunoista}
Under the assumptions of proposition 1 the optimal thresholds satisfy the following asymptotic results in terms of the intensity of the Poisson process
\begin{align*}
     y^* \xrightarrow{ \lambda \to \infty }  y^*_s, \quad
     b^* \xrightarrow{ \lambda \to \infty }  b^*_s,
\end{align*}
and if the underlying diffusion is also recurrent, we have the vanishing discount factor limits
\begin{align*}
     y^*_s \xrightarrow{ r \to 0 }  b^*_s, \quad
     y^* \xrightarrow{ r \to 0 }  b^*.
\end{align*}
\end{proposition}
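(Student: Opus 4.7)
The plan is to treat the four limits uniformly: for each threshold, rewrite its defining equation so that the terms either stay continuous in the relevant parameter or have a clean limit given by Proposition \ref{propositio asymptotiikasta}, then pass to a subsequential limit and identify it via the uniqueness clause of the corresponding theorem. The thresholds lie in compact sets, since the signs of the terms in (\ref{condition rewritten constraint discount}) force $K_{\theta_r}^r(y^*)\ge 0$, pinning $y^*\in(x^*,y^*_s]$, and the analogous remark applies to $b^*$, $y^*_s$.

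For the $r\to 0$ limits I divide the defining equation by $r\psi_r(\cdot)$. The case $y^*_s\to b^*_s$ is immediate: (\ref{ys yhtalo}) becomes $K_{\theta_r}^r(y^*_s)/(r\psi_r(y^*_s))=0$, and the recurrence part of Proposition \ref{propositio asymptotiikasta} sends this to $H(0,y^*_{s,\infty})=0$, so $y^*_{s,\infty}=b^*_s$ by uniqueness in Theorem 2. For $y^*\to b^*$ I additionally use (\ref{luonnolliset reunat integraali}) to write
\[
\frac{\psi'_r(y^*)}{r\psi_r(y^*)}=S'(y^*)\int_0^{y^*}\frac{\psi_r(z)}{\psi_r(y^*)}m'(z)\,dz\xrightarrow{r\to 0}S'(y^*_\infty)\,m(0,y^*_\infty),
\]
the limit following from (\ref{osamaara limit nollassa}) and dominated convergence (bounded by $1$, and $m'$ integrable on $(0,y^*_\infty)$ by Assumption 2(1)). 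Combined with the continuities $L_{\theta_r}^{r+\lambda}\to L_{\pi_\mu}^{\lambda}$ and $\varphi'_{r+\lambda}\to\varphi'_\lambda$ at $r=0$, and with $K_{\theta_r}^r/(r\psi_r)\to H(0,\cdot)$ from Proposition \ref{propositio asymptotiikasta}, the divided form of (\ref{condition rewritten constraint discount}) tends to equation (\ref{b yhtalo}) with $y^*_\infty$ in place of $b^*$, so $y^*_\infty=b^*$ by uniqueness in Theorem 4.

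For the $\lambda\to\infty$ limits I use the integral forms $-\varphi'_{r+\lambda}(y^*)=(r+\lambda)S'(y^*)\varphi_{r+\lambda}(y^*)D_\lambda$ and $L_{\theta_r}^{r+\lambda}(y^*)=(r+\lambda)\varphi_{r+\lambda}(y^*)N_\lambda$ with
\[
D_\lambda=\int_{y^*}^{\infty}\frac{\varphi_{r+\lambda}(z)}{\varphi_{r+\lambda}(y^*)}m'(z)\,dz,\qquad N_\lambda=\int_{y^*}^{\infty}\frac{\varphi_{r+\lambda}(z)}{\varphi_{r+\lambda}(y^*)}(\theta_r(z)-\theta_r(y^*))m'(z)\,dz,
\]
so that (\ref{condition rewritten constraint discount}) becomes $K_{\theta_r}^r(y^*)=(\psi'_r(y^*)/S'(y^*))\cdot N_\lambda/D_\lambda$ and the analogous rearrangement of (\ref{b yhtalo}) gives $H(0,b^*)=m(0,b^*)\cdot N_\lambda/D_\lambda$. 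It therefore suffices to show $N_\lambda/D_\lambda\to 0$. Observe that $N_\lambda/D_\lambda$ is the expectation of $\theta_r-\theta_r(y^*)$ under the probability measure with density proportional to $\varphi_{r+\lambda}(z)m'(z)$ on $(y^*,\infty)$, which concentrates at $y^*$ since $\varphi_{r+\lambda}(z)/\varphi_{r+\lambda}(y^*)\to 0$ for every $z>y^*$ by (\ref{limit infinity of fraction}).

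The main obstacle is making this concentration quantitative enough to control both integrals simultaneously, and this is where the proof gets technical. My plan is a Laplace-type split at $y^*+\delta$: the inner piece of $N_\lambda$ contributes at most $\omega_{\theta_r}(\delta)\cdot D_\lambda$ by continuity of $\theta_r$ on the compact set where $y^*$ lives, while the tail is bounded using the ratio $\varphi_{r+\lambda}(y^*+\delta)/\varphi_{r+\lambda}(y^*+\delta/2)\to 0$ from (\ref{limit infinity of fraction}), matched against the lower bound $D_\lambda\ge(\varphi_{r+\lambda}(y^*+\delta/2)/\varphi_{r+\lambda}(y^*))\,m(y^*,y^*+\delta/2)$ obtained from the monotonicity of $\varphi_{r+\lambda}$ and Assumption 2(1). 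This yields $\limsup_{\lambda\to\infty}|N_\lambda/D_\lambda|\le\omega_{\theta_r}(\delta)$, and letting $\delta\to 0$ closes the argument; uniqueness of $y^*_s$ (respectively $b^*_s$) finishes the claim.
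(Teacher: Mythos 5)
Your proposal is correct in outline, but it takes a genuinely different route from the paper. The paper proves only the first and last limits directly (the claims $b^*\to b^*_s$ and $y^*_s\to b^*_s$ are delegated to \cite{LempaSaarinen2019} and \cite{AlvarezHening2019}), and it does so by plugging the \emph{fixed} candidate threshold into the defining function of the other problem: it shows that $G(y^*_s)$, suitably normalized, vanishes as $\lambda\to\infty$ (using $K_{\theta_r}^r(y^*_s)=0$), and that $G(b^*)/\psi_r'(\cdot)\to0$ as $r\to0$ (using $F(b^*)=0$); the stability step from ``the candidate asymptotically satisfies the other optimality equation'' to convergence of the moving root is left implicit. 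You instead work at the moving thresholds: you first pin them in a compact set, divide the optimality equations by the natural normalizers ($r\psi_r$ for the vanishing-discount limits, $-\varphi_{r+\lambda}'$ for the large-$\lambda$ limits), pass to subsequential limits via Proposition \ref{propositio asymptotiikasta}, and identify the limit through the uniqueness clauses of Theorems 1--4. This buys a logically complete chain (compactness, limit equation, uniqueness) that covers all four claims in one framework; the price is extra analysis, most notably that for the $\lambda\to\infty$ limits your reduction needs the \emph{ratio} $N_\lambda/D_\lambda\to0$, which is strictly more than Proposition \ref{propositio asymptotiikasta} gives (since $D_\lambda\to0$ as well, $N_\lambda\to0$ alone does not suffice), and your Laplace-type splitting at $y^*+\delta$ is exactly the right tool for that and has no counterpart in the paper.

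To make the plan airtight you still need to execute a few technical points that you only flag: (i) the sign/ordering facts behind $y^*\in(x^*,y^*_s]$ and the analogous bounds for $b^*$ and for $y^*(r)$ as $r\to0$; (ii) the limits in Proposition \ref{propositio asymptotiikasta} are stated pointwise in $x$, while you evaluate them along thresholds that move with the parameter, so you need the locally uniform (or monotone, via the hitting-time representations $\psi_r(z)/\psi_r(y)=\mathbb{E}_z[e^{-r\tau_y}]$) versions, as your dominated-convergence remark begins to do; (iii) the tail bound must be uniform over $y^*$ in the compact set, with the tail integral controlled uniformly in $\lambda$ by comparing $\varphi_{r+\lambda}(z)/\varphi_{r+\lambda}(y^*+\delta)$ with a fixed $\lambda_0$ and using $\theta_r\in\mathcal{L}_1^r$; and (iv) since $\varphi_{r+\lambda}$ is only determined up to an $r$-dependent constant, the $r\to0$ continuity should be phrased through normalization-free ratios such as $\varphi_{r+\lambda}(z)/\varphi_{r+\lambda}(y^*)$. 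None of these appear problematic, and comparable points are also passed over silently in the paper's own argument.
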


\begin{proof}
We prove the first and the last claim, as the proof of the second claim can be found in \cite{LempaSaarinen2019}, proposition 3, and the proof of the third claim in \cite{AlvarezHening2019}, lemma 3.1. Define the functions
\begin{align*}
    G(x) & = \psi'_r(x)L_{\theta_r}^{r+\lambda}(x) + \varphi_{r+\lambda}'(x)K_{\theta_r}^r(x), \\
    F(x) & = S'(x) m(0,x) L_{\pi_{\mu}}^{\lambda}(x) + \varphi_{\lambda}'(x)H(0,x),
\end{align*}
and let $y^*_s$, $y^*$, $b^*$  be such that $K(y^*_s)=0$, $G(y^*)=0$ and $F(b^*)=0$. Using these notations, the first claim can be re-expressed as
\begin{align*}
     \frac{G(y^*_s)}{\varphi_{r+\lambda}(y^*_s)} \to  0 \text{ as } \lambda \to \infty.
\end{align*}
Now, utilizing the condition $K(y^*_s)=0$, together with the lemma \ref{propositio asymptotiikasta}, we have that
\begin{equation*}
   \frac{L_{\theta_r}^{r+\lambda}(y^*_s)}{\varphi_{r+\lambda}(y^*_s)} \to 0  \text{ as } \lambda \to \infty.
\end{equation*}
To prove the last claim, we first note, as above, that the claim is equivalent to
\begin{align*}
     \frac{G(b^*)}{\psi_r'(b^*)} \to  0 \text{ as } r \to 0.
\end{align*}
Hence, utilizing (\ref{luonnolliset reunat integraali}), we get that 
\begin{align*}
     \frac{G(b^*)}{\psi_r'(y^*_s)} &= \lambda \int_{b^*}^{\infty} \varphi_{r+\lambda}(z)\theta_r(z)m'(z)dz \\  &+ \frac{\varphi_{r+\lambda}'(b^*)}{S'(b^*)} \frac{\int_0^{b^*}\psi_r(z)\theta_r(z)m'(z)dz}{\int_0^{b^*}\psi_r(z) m'(z)dz}.
\end{align*}
By lemma \ref{propositio asymptotiikasta} we have that
\begin{align*}
     & \frac{\int_0^{b^*}\psi_r(z)\theta_r(z)m'(z)dz}{\int_0^{b^*}\psi_r(z) m'(z)dz} = \frac{K_{\theta_r}^r(b^*) + \theta_r(b^*)\int_0^{b^*}\psi_r(z)m'(z)dz}{\int_0^{b^*}\psi_r(z) m'(z)dz} \\
     & = \frac{\frac{K_{\theta_r}^r(b^*)}{\psi_r(b^*)}}{\int_0^{b^*}\frac{\psi_r(z)}{\psi_r(b^*)} m'(z)dz} + \theta_r(b^*) \\
     & \xrightarrow{ r \to 0 } \frac{H(0,b^*)}{m(0,b^*)} + \pi_{\mu}(b^*).
\end{align*}
Thus, the claim follows from the assumption $F(b^*)=0$.
\end{proof}

Similar limiting results hold also for the corresponding values of the defined control problems. However, it is clear that in terms of the vanishing discounting factor, the results hold only in the following Abelian sense.

\begin{proposition}[Asymptotics of the values] \label{raja-arvot arvoista} Under the assumptions of proposition 1 the values of the control problems satisfy the following asymptotic results
\begin{align*}
     V(x) \xrightarrow{ \lambda \to \infty} V_s(x), \quad \,
     \beta \xrightarrow{ \lambda \to \infty } \beta_s.
\end{align*}
Also, if the underlying diffusion is recurrent, we have the following Abelian limits
\begin{align*}
    r V(x) \xrightarrow{ r \to 0 } \beta, \quad \quad  r V_s(x) \xrightarrow{ r \to 0 } \beta_s.
\end{align*}
\end{proposition}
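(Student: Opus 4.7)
The plan is to verify each of the four limits directly from the closed-form expressions for $V$, $V_s$, $\beta$, $\beta_s$ given in Theorems~1--4, combined with the asymptotics of the functionals $L^r_{\theta_r}$, $K^r_{\theta_r}$ from Proposition~\ref{propositio asymptotiikasta} and the threshold convergences from Proposition~\ref{raja-arvot reunoista}. The basic algebraic tool is the identity $(R_r\pi)(x)=\gamma x+(R_r\theta_r)(x)$, obtained from the definition $g(x)=\gamma x-(R_r\pi)(x)$ together with~\eqref{funktion g resolventin ja thetan yhteys} evaluated at $\lambda=0$; this lets us recast the two-piece formula~\eqref{value of discounted constraint} for $V$ in a form directly comparable with $V_s$ from Theorem~1.

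For $V(x)\to V_s(x)$ as $\lambda\to\infty$ I split on whether $x$ lies below or above $y^*(\lambda)$; since $y^*(\lambda)\to y^*_s$ the branch is eventually fixed, and continuity at $x=y^*_s$ handles the equality. On the lower branch the identity above rewrites $V$ as $(R_r\pi)(x)-\psi_r(x)\bigl((R_r\pi)'(y^*)-\gamma\bigr)/\psi'_r(y^*)$, which matches the formula for $V_s$ after $y^*\to y^*_s$, so continuity of the coefficients closes this case. On the upper branch I analyse the pieces separately: $(R_{r+\lambda}\theta_r)(x)\to 0$; the middle term vanishes once $(R_{r+\lambda}\theta_r)'(y^*)$ is expressed via Lemma~\ref{lemma L and K second order} in terms of $L^{r+\lambda}_{\theta_r}(y^*)$ and we invoke Proposition~\ref{propositio asymptotiikasta} together with the decay $\varphi_{r+\lambda}(x)/\varphi_{r+\lambda}(y^*)\to 0$ from~\eqref{limit infinity of fraction}; and $A(y^*)\to\theta_r(y^*_s)/r$ through the Hille--Yosida style limit $\lambda(R_{r+\lambda}\theta_r)(y^*)\to\theta_r(y^*_s)$, with a vanishing correction from $\varphi_{r+\lambda}(y^*)/\varphi'_{r+\lambda}(y^*)$. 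Assembling these yields $\gamma x+\theta_r(y^*_s)/r=V_s(x)$. The claim $\beta\to\beta_s$ is then a continuity statement: the defining equation $H(0,b^*_s)=0$ in~\eqref{bs yhtalo} rewrites as $\pi_\mu(b^*_s)=m(0,b^*_s)^{-1}\!\int_0^{b^*_s}\pi_\mu(z)m'(z)\,dz$, so Proposition~\ref{raja-arvot reunoista} lets us pass the limit through the formula from Theorem~4.

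The Abelian limits are the main obstacles. For $rV_s(x)\to\beta_s$ I split at $y^*_s(r)$: when $x>b^*_s$, eventually $x\ge y^*_s(r)$ and the explicit formula gives $rV_s(x)=r\gamma x+\theta_r(y^*_s)\to\pi_\mu(b^*_s)=\beta_s$. When $x\le b^*_s$, then $x\le y^*_s(r)$ for all small $r$, and since the optimal singular policy leaves $X$ unperturbed until it reaches $y^*_s$, the strong Markov property yields
\begin{equation*}
V_s(x)=\mathbb{E}_x\!\left[\int_0^{\tau_{y^*_s}}\!\! e^{-rt}\pi(X_t)\,dt\right]+\frac{\psi_r(x)}{\psi_r(y^*_s)}\,V_s(y^*_s).
\end{equation*}
On $\{t\le\tau_{y^*_s}\}$ one has $X_t\in(0,y^*_s]$, so by monotonicity of $\pi$ we get the bound $r\mathbb{E}_x[\int_0^{\tau_{y^*_s}}e^{-rt}\pi(X_t)\,dt]\le\pi(y^*_s)(1-\psi_r(x)/\psi_r(y^*_s))\to 0$ by~\eqref{osamaara limit nollassa}; together with $\psi_r(x)/\psi_r(y^*_s)\to 1$ and the first case applied at $y^*_s$, this delivers $rV_s(x)\to\beta_s$. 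The proof of $rV(x)\to\beta$ follows the same template, but the constrained-optimal policy waits for the first Poisson jump after $X$ has crossed $y^*$ before acting: below $y^*(r)$ an analogous Markov decomposition applies, while above $y^*(r)$ one decomposes~\eqref{value of discounted constraint} using Lemmas~\ref{LinksBetweenFunctions} and~\ref{lemma L and K second order} and applies Proposition~\ref{propositio asymptotiikasta} to the remaining resolvent terms. I expect the subtlest step throughout the Abelian part to be justifying that ratios like $\psi_r(x)/\psi_r(y^*_s(r))$ still tend to $1$ when the reference point itself moves with $r$; this requires joint continuity of $(r,x)\mapsto\psi_r(x)$ combined with the threshold convergences of Proposition~\ref{raja-arvot reunoista}.
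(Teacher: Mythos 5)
Parts of your plan are sound and even depart usefully from the paper: the identity $(R_r\pi)(x)=\gamma x+(R_r\theta_r)(x)$ and the branch-matching argument for $x<y^*$ in the $\lambda\to\infty$ limit is essentially what the paper does, and your probabilistic treatment of $rV_s(x)\to\beta_s$ (hitting-time decomposition below $y^*_s$, the bound $r\mathbb{E}_x[\int_0^{\tau_{y^*_s}}e^{-rt}\pi(X_t)dt]\le\pi(y^*_s)(1-\psi_r(x)/\psi_r(y^*_s))$, and $rV_s=r\gamma x+\theta_r(y^*_s)$ above) is a correct, self-contained alternative to the paper, which simply cites Alvarez--Hening for that claim.

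The genuine gap is in the claim $rV(x)\to\beta$, which is the heart of the paper's proof and which you dispose of in one sentence (``decompose \eqref{value of discounted constraint} using Lemmas \ref{LinksBetweenFunctions} and \ref{lemma L and K second order} and apply Proposition \ref{propositio asymptotiikasta} to the remaining resolvent terms''). This cannot work as stated: after multiplying by $r$, the dominant surviving term is $rA(y^*)=\lambda\bigl[(R_{r+\lambda}\theta_r)(y^*)-(R_{r+\lambda}\theta_r)'(y^*)\varphi_{r+\lambda}(y^*)/\varphi_{r+\lambda}'(y^*)\bigr]$, which does not vanish and whose $r\to0$ limit must be shown to equal $\beta=m(0,b^*)^{-1}\int_0^{b^*}\pi_\mu(z)m'(z)dz$. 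Proposition \ref{propositio asymptotiikasta} says nothing about this Wronskian-type combination at rate $r+\lambda$; what is needed is (i) the value-matching condition \eqref{constraint ongelman jatkuvuusehto} across $y^*$ to eliminate the derivative terms, (ii) the explicit computation of $\lim_{r\to0}r(R_r\theta_r)(x)$ via \eqref{resolventin laskukaava} and \eqref{osamaara limit nollassa}, and (iii) the identification of the resulting $\lambda$-dependent limit, $-\lambda S'(b^*)\varphi_\lambda'(b^*)^{-1}\int_{b^*}^\infty\pi_\mu(z)\varphi_\lambda(z)m'(z)dz$, with $\beta$ through the ergodic optimality condition \eqref{b yhtalo} and $y^*\to b^*$; none of these steps, in particular the final identification, appears in your plan. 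A secondary weakness sits in the $\lambda\to\infty$ upper branch: Lemma \ref{lemma L and K second order} does not express $(R_{r+\lambda}\theta_r)'(y^*)$ in terms of $L^{r+\lambda}_{\theta_r}$ alone (it is a Wronskian involving the second derivative), and your claim that the correction in $A(y^*)$ dies because $\varphi_{r+\lambda}(y^*)/\varphi_{r+\lambda}'(y^*)$ vanishes tacitly requires boundedness of $\lambda(R_{r+\lambda}\theta_r)'(y^*)$ and $\varphi_{r+\lambda}/\varphi_{r+\lambda}'\to0$, neither of which you establish; the paper instead gets both cancellations from the value-matching identity \eqref{constraint ongelman jatkuvuusehto} together with continuity of $V_s$ across $y^*_s$, and your argument needs an analogous substitute.
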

\begin{proof}
For the last claim see lemma 3.1 of \cite{AlvarezHening2019}. To prove the third, we first re-write the value function (\ref{value of discounted constraint}) using lemma \ref{LinksBetweenFunctions} as
\begin{equation} 
V(x)=
    \begin{cases} \label{value of constraint rewritten}
     \gamma x+ (R_{r+\lambda}\theta_r)(x)-\frac{(R_{r+\lambda}\theta_r)'(y^*)}{\varphi_{r+\lambda}'(y^*)}\varphi_{r+\lambda}(x)+ A(y^*), & \quad x \geq y^*, \\
      \gamma x +  \psi_r(x)\bigg[\frac{(R_r \theta_r)(x)}{\psi_r(x)}-\frac{(R_r \theta_r)'(y^*)}{\psi_r'(y^*)} \bigg], & \quad  x< y^*,
    \end{cases}
\end{equation}
where
\begin{equation*}
   A(y^*) =  \frac{\lambda}{r} \bigg[ (R_{r+\lambda}\theta_r)(y^*) - (R_{r+\lambda}\theta_r)'(y^*)\frac{\varphi_{r+\lambda}(y^*)}{\varphi_{r+\lambda}'(y^*)} \bigg].
\end{equation*}
We notice that when $x > y^*$, the value function $r V(x)$ has convenient presentation in terms of the limit $r \to 0$. However, when $x < y^*$ we have to proceed as follows. Because $V(x)$ is continuous across the boundary $y^*$, we find 
\begin{align} \label{constraint ongelman jatkuvuusehto}
    & (r+\lambda)\psi_r'(y^*)(\varphi_{r+\lambda}'(y^*)(R_{r+\lambda}\theta_r)(y^*) - \varphi_{r+\lambda}(y^*)(R_{r+\lambda}\theta_r)'(y^*)) \\ 
    = & r\varphi_{r+\lambda}'(y^*)(\psi_r'(y^*)(R_{r}\theta_r)(y^*) - \psi_r(y^*)(R_{r}\theta_r)'(y^*)), \nonumber
\end{align}
which can be re-organized as
\begin{align} \label{constraint ongelman jatkuvuusehto yli reunan}
    & - r \frac{(R_{r}\theta_r)'(y^*)}{\psi_r'(y^*)} + r \frac{(R_{r}\theta_r)(y^*)}{\psi_r(y^*)}  \\
    & = (r+\lambda)\bigg(\frac{\varphi_{r+\lambda}'(y^*)(R_{r+\lambda}\theta_r)(y^*) - \varphi_{r+\lambda}(y^*)(R_{r+\lambda}\theta_r)'(y^*)}{\varphi_{r+\lambda}'(y^*) \psi_r(y^*)} \bigg). \nonumber
\end{align}
Thus, we get that
\begin{align*}
    & r\gamma x +  r \psi_r(x)\bigg[\frac{(R_r \theta_r)(x)}{\psi_r(x)}-\frac{(R_r \theta_r)'(y^*)}{\psi_r'(y^*)} \bigg] \\
    & = r \gamma x +  r(R_r \theta_r)(x) - r  \frac{\psi_r(x)}{\psi_r(y^*)} (R_{r}\theta_r)(y^*)  \\
    & + (r+\lambda)\frac{\psi_r(x)}{\psi_r(y^*)}\bigg(\frac{\varphi_{r+\lambda}'(y^*)(R_{r+\lambda}\theta_r)(y^*) - \varphi_{r+\lambda}(y^*)(R_{r+\lambda}\theta_r)'(y^*)}{\varphi_{r+\lambda}'(y^*)} \bigg). 
\end{align*}
Using the formula (\ref{resolventin laskukaava}) and (\ref{osamaara limit nollassa}), we see that 
\begin{align*}
      r (R_r \theta_r)(x)
      & =  r \frac{\varphi_r(x) \int_0^{x} \psi_r(z)\theta_r(z)m'(z)dz +\psi_r(x)\int_x^{\infty} \varphi_r(z) \theta_r(z) m'(z) dz}{\frac{1}{S'(x)}[\psi'_r(x) \varphi_r(x)-\psi_r(x)\varphi_r'(x)]} \\
      & = \frac{\varphi_r(x)\int_0^{x} \psi_r(z)\theta_r(z)m'(z)dz + \psi_r(x) \int_x^{\infty} \varphi_r(z) \theta_r(z) m'(z) dz}{ \varphi_r(x)\int_0^x \psi_r(z)m'(z)dz +\psi_r(x) \int_x^{\infty} \varphi_r(z)m'(z)dz}
      \\
      & =  \frac{\int_0^{x} \frac{\psi_r(z)}{\psi_r(x)}\theta_r(z)m'(z)dz +  \int_x^{\infty} \frac{\varphi_r(z)}{\varphi_r(x)} \theta_r(z) m'(z) dz}{ \int_0^x \frac{\psi_r(z)}{\psi_r(x)}m'(z)dz + \int_x^{\infty} \frac{\varphi_r(z)}{\varphi_r(x)}m'(z)dz}
      \\
      & \xrightarrow{ r \to 0 } \frac{\int_0^{\infty}\pi_{\mu}(z)m'(z)dz}{ \int_0^{\infty} m'(z)dz},
\end{align*}
and thus, by (\ref{osamaara limit nollassa}) we have
\begin{equation*}
     r(R_r \theta_r)(x) - r  \frac{\psi_r(x)}{\psi_r(y^*)} (R_{r}\theta_r)(y^*) \xrightarrow{ r \to 0 } 0.
\end{equation*}
Therefore, by continuity and (\ref{raja-arvot reunoista}), the value function satisfies
\begin{equation*}
rV(x)  \xrightarrow{ r \to 0 }
    \begin{cases}
     \lambda\frac{\varphi_{\lambda}'(b^*)(R_{\lambda} \pi_{\mu})(b^*) - \varphi_{\lambda}(b^*)(R_{\lambda} \pi_{\mu})'(b^*)}{\varphi_{\lambda}'(b^*)}, & \quad x \geq b^* \\
     \lambda\frac{\varphi_{\lambda}'(b^*)(R_{\lambda} \pi_{\mu})(b^*) - \varphi_{\lambda}(b^*)(R_{\lambda} \pi_{\mu})'(b^*)}{\varphi_{\lambda}'(b^*)}, & \quad  x< b^*.
    \end{cases}
\end{equation*}
Finally, utilizing (\ref{resolventin laskukaava}), the limiting value reads as
\begin{equation*}
    - \frac{\lambda S'(b^*)}{\varphi_{\lambda}'(b^*)} \int_{b^*}^{\infty} \pi_{\mu}(z) \varphi_{\lambda}(z) m'(z)dz,
\end{equation*}
which completes the proof of the third claim.

To prove the second claim, we notice that the value function $V(x)$ is independent of $\lambda$ when $x<y^*$. Thus, we focus this time on the region $x>y^*$. We re-organize the terms $V(x)$ in the upper region as
\begin{equation} \label{esitys ylemmassa alueessa arvolle}
     \gamma x+ (R_{r+\lambda}\theta_r)(x)-\frac{(R_{r+\lambda}\theta_r)'(y^*)}{\varphi_{r+\lambda}'(y^*)}\varphi_{r+\lambda}(x)+ A(y^*) 
\end{equation}
where
\begin{equation*}
   A(y^*) =  \frac{\lambda}{r} \bigg[ (R_{r+\lambda}\theta_r)(y^*) - (R_{r+\lambda}\theta_r)'(y^*)\frac{\varphi_{r+\lambda}(y^*)}{\varphi_{r+\lambda}'(y^*)} \bigg].
\end{equation*}
Because diffusions are Feller-processes, we know that $\lambda (R_{r+\lambda}\theta_r) \to \theta_r$ as $\lambda \to \infty$ (in sup-norm), see \cite{RogersWilliams2001} pp. 235. Thus,
\begin{equation*} 
 \gamma x + \frac{\lambda (R_{r+\lambda}\theta_r)(x)}{\lambda} + \frac{\lambda (R_{r+\lambda}\theta_r)(y^*)}{r} \xrightarrow{ \lambda \to \infty } \gamma x + \frac{\theta_r(y^*_s)}{r}.
\end{equation*}
To deal with the remaining terms in (\ref{value of constraint rewritten}), we note that by (\ref{constraint ongelman jatkuvuusehto}) 
\begin{align*}
  \frac{(R_{r+\lambda}\theta_r)'(y^*)}{\varphi_{r+\lambda}'(y^*)} & = \frac{(R_{r+\lambda}\theta_r)(y^*)}{\varphi_{r+\lambda}(y^*)} - \frac{r}{r+\lambda}\frac{(R_{r}\theta_r)(y^*)}{\varphi_{r+\lambda}(y^*)} \\
  & + \frac{r}{r+\lambda} \frac{\psi_r(y^*)}{\psi'_r(y^*)} \frac{(R_{r}\theta_r)'(y^*)}{\varphi_{r+\lambda}(y^*)}.
\end{align*}
Utilizing the above we get by (\ref{osamaara limit nollassa})
\begin{align*}
    &  \frac{(R_{r+\lambda}\theta_r)'(y^*)}{\varphi'_{r+\lambda}(y^*)} \varphi_{r+\lambda}(x) \\
      & =  \frac{\varphi_{r+\lambda}(x)}{\varphi_{r+\lambda}(y^*)} (R_{r+\lambda}\theta_r)(y^*)  - \frac{r}{r+\lambda} \frac{\varphi_{r+\lambda}(x)}{\varphi_{r+\lambda}(y^*)} (R_{r}\theta_r)(y^*) \\
     & + \frac{r}{r+\lambda} \frac{\psi_r (y^*)}{\psi_r'(y^*)} \frac{\varphi_{r+\lambda}(x)}{\varphi_{r+\lambda}(y^*)} (R_{r}\theta_r)'(y^*) \\
      & \xrightarrow{ \lambda \to \infty }  0
\end{align*}
and
\begin{align*}
    & \frac{\lambda}{r} \frac{(R_{r+\lambda}\theta_r)'(y^*)}{\varphi'_{r+\lambda}(y^*)} \varphi_{r+\lambda}(y^*) \\
     & = \frac{\lambda}{r} (R_{r+\lambda}\theta_r)(y^*) - \frac{\lambda}{r+\lambda}(R_{r}\theta_r)(y^*) + \frac{\lambda}{r+\lambda} \frac{\psi_r (y^*)}{\psi_r'(y^*)} (R_{r}\theta_r)'(y^*) \\
     & \xrightarrow{ \lambda \to \infty }  \frac{\theta_r(y^*_s)}{r} - (R_{r}\theta_r)(y^*_s) + \frac{\psi_r(y^*_s)}{\psi_r'(y^*_s)} (R_{r}\theta_r)'(y^*_s).
\end{align*}
As the value function $V_s(x)$ is continuous over the boundary $y^*_s$, we further find that 
$$
\frac{\theta_r(y^*_s)}{r} - (R_{r}\theta_r)(y^*_s) + \frac{\psi_r(y^*_s)}{\psi_r'(y^*_s)} (R_{r}\theta_r)'(y^*_s) = 0.
$$
Combining the above limits the result follows by continuity and proposition \ref{raja-arvot reunoista}.

Lastly, the second claim of the proposition follows by continuity of the functions and proposition \ref{raja-arvot reunoista}, as $\beta_s$ can also be represented as (see \cite{Alvarez2019} pp. 17)
\begin{equation*}
    \beta_s =  m(0,b_s^*)^{-1} \left[ \int^{b_s^*}_0 \pi_{\mu}(z)m'(z) dz \right].
\end{equation*}

\end{proof}

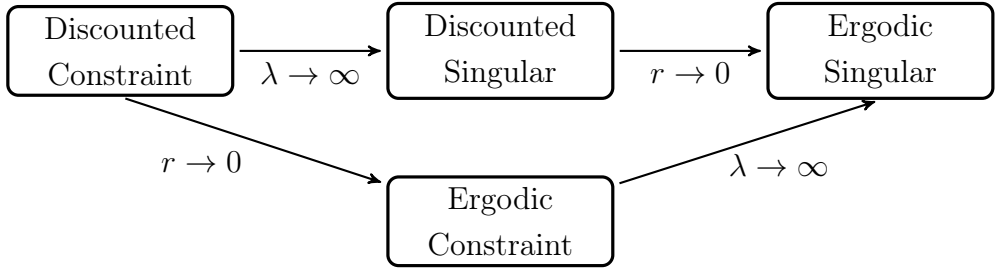
\begin{figure}\label{Fig:Diagram}
\begin{tikzpicture}[node distance=2cm, auto,]
\node[punkt] (dc) {Discounted Constraint};
\node[punkt, right=of dc] (ds) {Discounted Singular}
edge[pil,<-] node[auto] {$\lambda \to \infty$} (dc.east);
\node[punkt, right=of ds] (es) {Ergodic Singular}
edge[pil,<-] node[auto, below] {$r \to 0$} (ds.east);
\node[punkt, below=1cm of ds] (ec) {Ergodic Constraint}
edge[pil] node[auto, below] {$\quad \quad \lambda \to \infty$} (es.south)
edge[pil,<-] node[auto] {$r \to 0$} (dc.south);
\end{tikzpicture}
\caption{Relations between the control problems. These relations hold for the optimal thresholds and also for the values, in the sense of propositions 1 and 2.}
\end{figure}

\section{Illustration}

\subsection{Brownian motion with drift}

Let the underlying process $X_t$ be defined by
\begin{equation}
dX_t = \mu dt + dW_t, \quad \, X_0=x,
\end{equation}
where $\mu > 0$. Also, we let the process evolve in $\mathbb{R}$ and choose a quadratic running cost $\pi(x) = x^2$.
The minimal excessive functions are in this case known to be 
$$
\varphi_{\lambda}(x) = e^{-\big(\sqrt{\mu^2+2 \lambda}+\mu\big) x}, \quad \, \psi_{\lambda}(x) = e^{\big(\sqrt{\mu^2+2 \lambda}-\mu \big) x},
$$ 
and the scale density and speed measure read as 
$$
S'(x)= \exp(-2 \mu x), \quad
m'(x)=2 \exp(2 \mu x),
$$
respectively. The net convenience yield now takes the form $\theta(x) = x^2 + \gamma(\mu-rx)$. We notice immediately that our assumptions 1 and 3 hold and so the results apply in the case of the discounted problems (theorems 1 and 3).

To illustrate the results of proposition \ref{raja-arvot reunoista}, we solve the optimality conditions (\ref{ys yhtalo}) and (\ref{condition rewritten constraint discount}). Conveniently the solution to these equations can be represented explicitly. To solve the equations we need to find the functions $K_{\theta_r}^r(x)$ and $L_{\theta_r}^{r+\lambda}(x)$. Elementary integration yields
\begin{align*}
    K_{\theta_r}^r(x) & = \frac{2 e^{x \alpha_r^+} \big( 2 + (-2 x + r \gamma) \alpha_r^+ \big)}{(\alpha_r^+)^3},\\ 
    L_{\theta_r}^{r+\lambda}(x) & = \frac{2 e^{x \alpha_r^-} \big( 2 + (2 x - r \gamma) \alpha_r^- \big)}{(\alpha_r^-)^3},
\end{align*}
where $\alpha_r^+ = \mu + \sqrt{2r + \mu^2}$ and $\alpha_r^- = \mu - \sqrt{2r + \mu^2}$. Plugging these representations to the equations (\ref{ys yhtalo}) and (\ref{condition rewritten constraint discount}), a simplification yields as solutions the thresholds
\begin{align*}
    & y^*_s = \frac{r \gamma}{2}+ \frac{1}{\alpha_r^+}, 
    & y^* = \frac{r \gamma}{2} + \frac{1}{\alpha_r^+} + \frac{1}{\alpha_{r+\lambda}^-}.
\end{align*}
 Using these explicit representation, we get a limit as in proposition \ref{raja-arvot reunoista}. These thresholds are illustrated in the figure \ref{lplot}.
\begin{figure}
    \centering
    \includegraphics[scale=0.53]{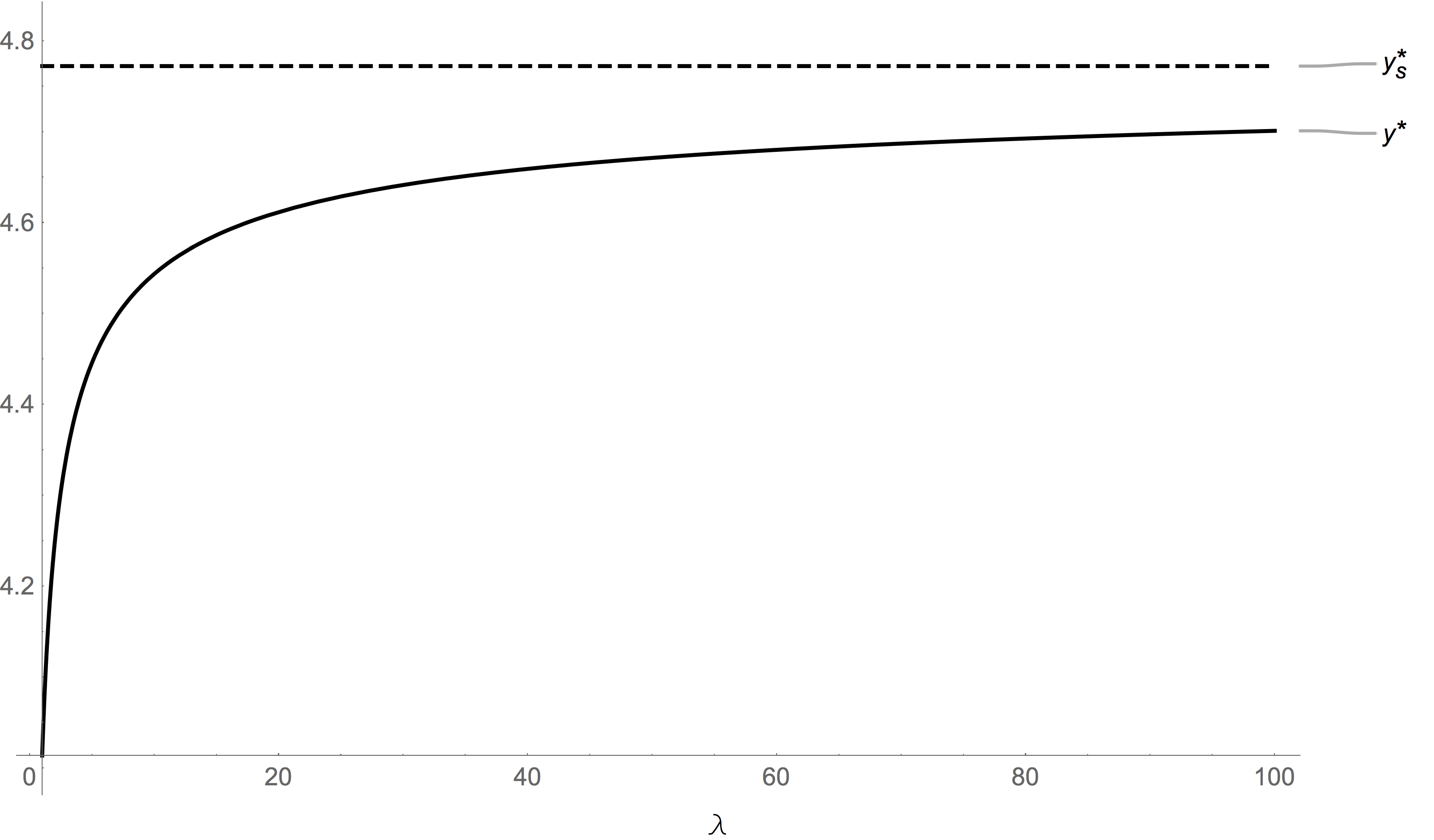}
    \caption{\footnotesize Threshold boundaries as a function of the intensity of the Poisson process with the parameters $\gamma = 0.001$, $\mu=0.1$ and $r=0.001$.}
    \label{lplot}
\end{figure}

\subsection{Controlled Ornstein-Uhlenbeck process}

Consider dynamics that are characterized by a stochastic differential equation 
$$
dX_t = - \beta X_t dt + dW_t, \quad \, X_0=x,
$$
where $\beta > 0$. This diffusion is often used to model continuous time systems that have mean reverting behaviour. To illustrate the results we choose the running cost
$
\pi(x) = |x|,
$
and consequently $\theta(x) = |x|-\gamma (\beta  + r) x$. The scale density and the density of speed measure are in this case
$$
S'(x)= \exp( \beta  x^2) , \quad
m'(x)=2 \exp(- \beta  x^2),
$$
and the minimal $r$-excessive functions read as (see \cite{BorodinSalminen} p. 141)
$$
\varphi_{\lambda}(x) = e^{\frac{\beta x^2}{2}} D_{-\lambda / \beta}(x \sqrt{2 \beta}), \quad \, \psi_{\lambda}(x) = e^{\frac{\beta x^2}{2}} D_{-\lambda / \beta}(-x \sqrt{2 \beta}),
$$ 
where $D_{\nu}(x)$ is a parabolic cylinder function. We note that our assumptions 1 and 2 are satisfied and that this process is positively recurrent recurrent (see \cite{BorodinSalminen} p.141).  Unfortunately, the equations for the optimal thresholds take rather complicated forms, and thus the results in proposition (\ref{raja-arvot reunoista}) are only illustrated numerically.

\begin{figure}
    \centering
    \includegraphics[scale=0.53]{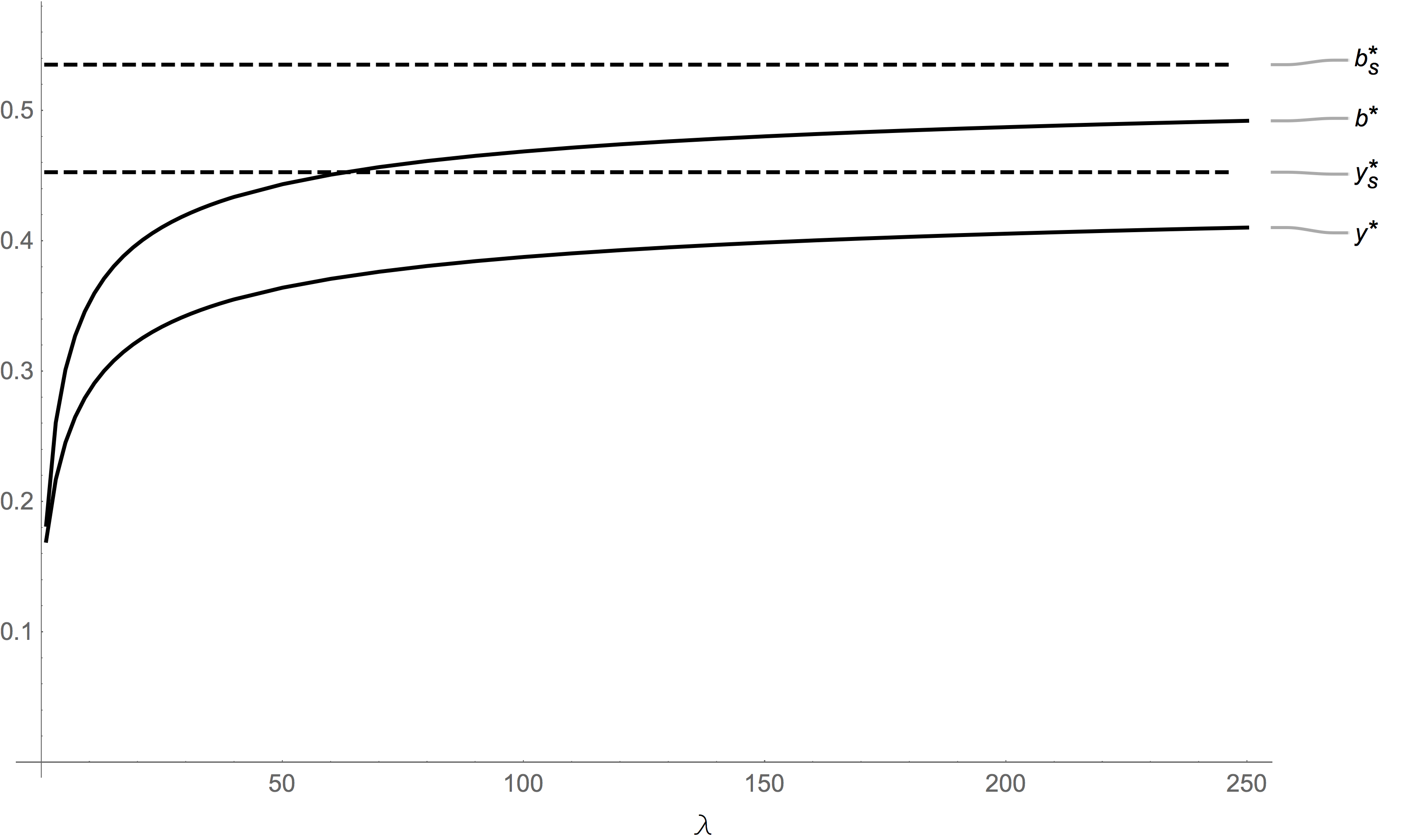}
    \caption{\footnotesize Threshold boundaries as a function of the intensity of the Poisson process with the parameters $\gamma = 0.1$, $\beta=1$ and $r=1$.}
    \label{lOUplot}
\end{figure}

\begin{figure}
    \centering
    \includegraphics[scale=0.53]{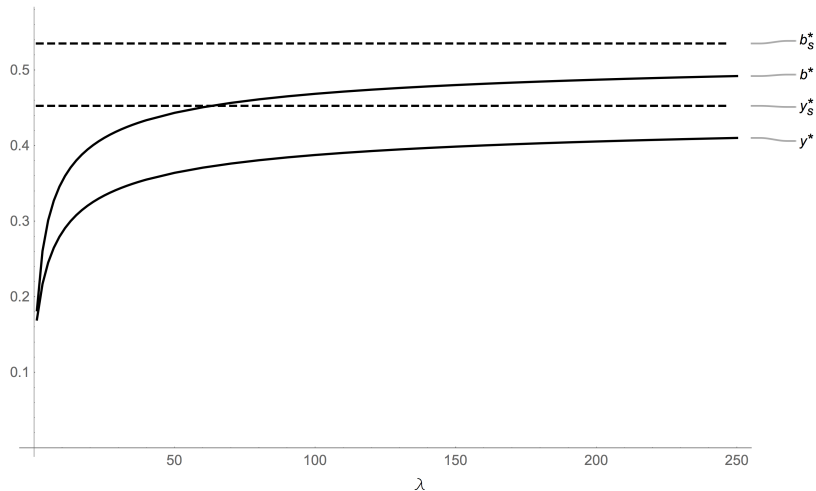}
    \caption{\footnotesize Threshold boundaries as a function of the discounting with the parameters $\gamma = 0.1$, $\beta=1$ and $\lambda=20$.}
    \label{rOUplot}
\end{figure}

\subsection*{Acknowledgements} 
\thispagestyle{empty}

We would like to gratefully acknowledge the emmy.network foundation under the aegis of the Fondation de Luxembourg, for its continued support.

\end{document}